\numberwithin{equation}{section}
\theoremstyle{plain}
\newtheorem{theorem}{Theorem}
\numberwithin{theorem}{section}
\newtheorem{proposition}[theorem]{Proposition}
\theoremstyle{definition}
\newtheorem{definition}[theorem]{Definition}
\theoremstyle{remark}
\newtheorem*{remark}{Remark}
\newtheorem*{remarks}{Remarks}
\newenvironment{psmallmatrix}{\left(\begin{smallmatrix}}{\end{smallmatrix}\right)}
\newcommand{\Stab}{\mathrm{Stab}}
\newcommand{\R}{\mathbb{R}}
\newcommand{\Z}{\mathbb{Z}}
\newcommand{\N}{\mathbb{N}}
\newcommand{\C}{\mathbb{C}}
\renewcommand{\H}{\mathbb{H}}
\newcommand{\z}{\mathfrak{z}}
\newcommand{\re}{\operatorname{Re}}
\newcommand{\im}{\operatorname{Im}}
\newcommand{\SL}{{\text {\rm SL}}}
\begin{document}

\title[Number theoretic generalization of the Monster denominator formula]{Number theoretic generalization of the Monster denominator formula}

\author{Kathrin Bringmann, Ben Kane, Steffen L\"obrich, Ken Ono, and Larry Rolen}

\address{Mathematical Institute, University of Cologne, Weyertal 86-90, 50931 Cologne, Germany}
\email{kbringma@math.uni-koeln.de}

\address{Department of Mathematics, University of Hong Kong, Pokfluam, Hong Kong}
\email{bkane@maths.hku.hk}

\address{Mathematical Institute, University of Cologne, Weyertal 86-90, 50931 Cologne, Germany}
\email{steffen.loebrich@uni-koeln.de}

\address{Department of Mathematics and Computer Science, Emory University,
Atlanta, Georgia 30022} \email{ono@mathcs.emory.edu}

\address{Hamilton Mathematics Institute \& School of Mathematics, Trinity College, Dublin 2, Ireland}
\email{lrolen@maths.tcd.ie}

\thanks{The first and third author are supported by the Deutsche Forschungsgemeinschaft (DFG) Grant No. BR 4082/3-1. The second author was supported by grant project numbers 27300314, 17302515, and 17316416 of the Research Grants Council. The fourth author thanks the support of the NSF and the Asa Griggs Candler Fund.
}
\subjclass[2010]{11F03, 11F37, 11F30}

\begin{abstract} 
The {\it denominator formula} for the Monster Lie algebra is the product expansion for the modular function $j(z)-j(\tau)$  in terms of the Hecke system of $\SL_2(\Z)$-modular functions $j_n(\tau)$. This formula can be reformulated entirely number theoretically. Namely, it
is equivalent to the description of the generating function for the $j_n(z)$ as a weight 2 modular form in $\tau$ with a  pole at  $z$.
Although these results rely on the fact that $X_0(1)$ has
genus 0, here we obtain a generalization, framed in terms of {\it polar harmonic Maass forms}, for all of the $X_0(N)$ modular curves. 
In this survey of recent work, we discuss this generalization, and we offer an introduction to the theory of polar harmonic Maass forms. We conclude with applications to formulas of Ramanujan and Green's functions.
\end{abstract}

\maketitle

\section{Introduction and statement of results}

The theory of {\it Monstrous Moonshine} (see \cite{Moonshine} and \cite{Gannon} for an introduction) offers the {\it denominator formula} for the Monster Lie algebra, the striking infinite product identity
$$
J(z)-J(\tau)
=e^{-2\pi iz} \prod_{m>0, ~n\in \Z}
\left(1-e^{2\pi i m z}e^{2\pi i n \tau}\right)^{c(mn)},
$$
where $J(\tau)$ the usual $\SL_2(\Z)$ Hauptmodul 
\begin{align*}
{{
J(\tau)
}\rm}
=\sum_{n=-1}^{\infty}c(n)e^{2\pi i n \tau}
	&:=\frac{\left(1+240\sum_{n=1}^{\infty}\sum_{d\mid n}d^3 e^{2\pi i n \tau}\right)^3}{e^{2\pi i \tau}\prod_{n=1}^{\infty}(1-e^{2\pi i n \tau})^{24}}
{{
-744
}\rm}
=e^{-2\pi i \tau}+196884e^{2\pi i \tau}+\cdots.
\end{align*}
This identity is the crucial device 
for associating an algebra structure to  the infinite dimensional monster module $V^{\natural}$ whose graded dimensions are the Fourier coefficients of $J(\tau)$. 
Generalizations of the Monster denominator formula to Hauptmoduln of certain genus $0$ subgroups have been obtained in \cite{Scheithauer}.

From a number theoretic perspective, it is natural to consider the logarithmic derivative with respect to $\tau$ of this infinite product.
Asai, Kaneko, and Ninomiya (see Theorem 3 of \cite{AKN}) considered this line of reasoning, and they proved the equivalent formulation
\begin{equation*}
H_{z}(\tau):= \sum_{n=0}^{\infty} j_n(z)e^{2\pi i n \tau}=-\frac{1}{2\pi i} \frac{J'(\tau)}{J(\tau)-J(z)}=\frac{E_4(\tau)^2E_6(\tau)}{\Delta(\tau)} \frac{1}{J(\tau)-J(z)}.
\end{equation*}
The modular functions $j_n$ form a Hecke system. Namely, if we let
$j_0(\tau):=1$ and $j_1(\tau):=J(\tau)$,  then the others are obtained
by applying the normalized Hecke operator $T(n)$
\begin{equation*}
j_n(\tau):=j_1(\tau) \ | \ T(n).
\end{equation*}
The normalization is chosen so that 
$$
j_n(\tau)=e^{-2\pi i n \tau}+O_n(e^{2\pi i \tau}).
$$
\begin{remark}
These objects are critical for Zagier's \cite{Zagier} seminal paper on traces of singular moduli and the Duncan-Frenkel work \cite{DF} on the Moonshine Tower.
Carnahan \cite{Carnahan} has obtained similar denominator formulas for completely replicable modular functions, and other analogues for certain genus zero groups $\Gamma_1(N)$ have been obtained by Ye in \cite{Ye}. 
\end{remark}

In this survey we consider work on generalizations of these results to more general modular curves.
We consider the functions $H_z$ as objects to generalize to other modular curves, not just those
with genus 0 such as $X_0(1)$. To this end, we note  the features that we hope to find in such a generalization.
One obvious feature is that
if $z\in \H$, then $H_{z}$ is a weight $2$ meromorphic modular form on $\SL_2(\Z)$ with a single pole (modulo $\SL_2(\Z)$) at the point $z$. Another critical feature is that the coefficients are the special values $j_n(z)$ for a fixed Hecke system of modular functions.
Although the proof of these features for the $H_z$ relies on the fact that $X_0(1)$ has genus 0, it turns out that there is indeed a generalization for arbitrary genus.
The authors have obtained these results in \cite{FirstPaper}, and this
extension requires polar harmonic Maass forms.

Here we consider the modular curves $X_0(N)$.  For $n\in\N$, we define a Hecke system of  $\Gamma_0(N)$ harmonic Maass functions $j_{N,n}$ which generalize the $j_n$. An important property we obtain about the $j_{N,n}$ is their growth in the $n$-aspect, which is not easily described in terms of Fourier expansions. Instead, we use ``Ramanujan-like'' expansions, sums of the form
\begin{equation}\label{eqn:Ramanujanlike}
\sum_{\lambda\in \Lambda_{z}}\sum_{(c,d)\in S_{\lambda}} \frac{1}{\lambda^{k}} e\left(-\frac{n}{\lambda}r_{z}(c,d,k)\right) e^{\frac{2\pi n\mathrm{Im}(z)}{\lambda}},
\end{equation}
for certain real numbers $r_{z}(c,d,k)$ (see \eqref{eqn:rcd}), $e(x):=e^{2\pi i x}$, $\Lambda_z$ a lattice in $\R$ (see \eqref{eqn:Lambdazdef}), and $S_{\lambda}$ the set of solutions to $Q_{z}(c,d)=\lambda$ for a certain positive-definite binary quadratic form $Q_z$ (see \eqref{eqn:Slambdadef}).

We construct weight 2 polar harmonic Maass forms $H_{N,z}^*$ which generalize  the $H_{z}$.  We have two cases for the $H_{N,z}^*$ which we consider separately. These two cases correspond to points in the upper half plane and those points which are cusps of $X_0(N).$
The first theorem (cf Theorem 1.1 of \cite{FirstPaper}) summarizes the essential properties of these functions when $z\in \H$.

\begin{theorem}\label{AKNGeneralization}
If $z\in \H$, then $H_{N,z}^*(\tau)$ is the unique (up to normalization) weight 2 polar harmonic Maass form on $\Gamma_0(N)$ which vanishes at all cusps and has a single simple pole at $z$. Moreover, the following are true.
\begin{enumerate}[leftmargin=*, label={\rm(\arabic*)}]
\item
 If $z\in \H$ and $\im(\tau) > \operatorname{max}\{\mathrm{Im}(z), \frac1{\mathrm{Im}(z)}\}$, then we have that
$$
H_{N,z}^*(\tau) =\frac{3}{\pi\left[\SL_2(\Z):\Gamma_0(N)\right] \im(\tau)}+\sum_{n=1}^{\infty} j_{N,n}(z)e^{2\pi i n \tau}.
$$

\item For $\gcd(N,n)=1$, we have
$j_{N,n}(\tau) =j_{N,1}(\tau) \ |\ T(n).$
\item For $n\mid N$, we have
$j_{N,n}(\tau)= j_{\frac{N}{n},1}(n\tau).$

\item  As $n\to \infty$, we have
\[
j_{N,n}\left(\tau\right) = \sum_{\substack{\lambda\in\Lambda_{\tau}\\ \lambda\leq n}}\sum_{(c,d)\in S_{\lambda}} e\left(-\frac{n}{\lambda} r_{\tau}(c,d)\right) e^{\frac{2\pi n \mathrm{Im}(\tau)}{\lambda}} + O_{\tau}(n).
\]
\end{enumerate}
\end{theorem}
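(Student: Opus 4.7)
The plan is to first construct $H_{N,z}^{*}$ via a suitable weight $2$ Maass--Poincar\'e series, then extract parts (1)--(3) from its Fourier expansion combined with a uniqueness statement for polar harmonic Maass forms with prescribed singularities, and finally obtain part (4) by estimating the tail of the Poincar\'e expansion.

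\textbf{Construction and uniqueness.} My candidate for $H_{N,z}^{*}$ is the value at $s=0$ of the analytic continuation of a weight $2$ Maass--Poincar\'e series of the schematic form
$$P_{N,z}(\tau,s) := c_N \sum_{\gamma\in \Gamma_{\infty}\backslash\Gamma_0(N)} \frac{\im(z)^{s+1}}{(\gamma\tau - z)^{2}\,|\gamma\tau - z|^{2s}\,(c\tau+d)^{2}},$$
where $\gamma = \smallabcd$ runs over coset representatives and $c_N$ is chosen to normalize the residue. A standard Hecke-trick argument produces meromorphic continuation in $s$ and shows that the value at $s=0$ is weight $2$ and real-analytic on $\Gamma_0(N)$, annihilated by the hyperbolic Laplacian away from $\Gamma_0(N) z$, vanishing at every cusp, and carrying a simple pole only on the orbit of $z$. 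Uniqueness then follows because the difference of two candidates would be a weight $2$ polar harmonic Maass form on $\Gamma_0(N)$ with no poles and vanishing at all cusps; the valence formula, combined with the holomorphic/non-holomorphic decomposition of polar harmonic Maass forms, forces such a form to be zero.

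\textbf{Fourier expansion and Hecke structure (parts (1)--(3)).} Unfolding the $\Gamma_0(N)$-sum against the cusp $\infty$ in the region $\im(\tau) > \max\{\im(z), 1/\im(z)\}$ yields an absolutely convergent Fourier expansion. The constant term receives a non-holomorphic contribution $3/(\pi[\SL_2(\Z):\Gamma_0(N)]\im(\tau))$ from the analytic continuation at $s=0$, exactly analogous to the $1/\im(\tau)$ piece appearing in the completion of the weight $2$ Eisenstein series, while the remaining nonzero Fourier modes produce a holomorphic series $\sum_{n\ge 1} j_{N,n}(z)\,e^{2\pi i n \tau}$; this \emph{defines} $j_{N,n}(z)$. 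Parts (2) and (3) then follow by matching principal parts: both $j_{N,1}\,|\,T(n)$ (for $\gcd(n,N)=1$) and $j_{N/n,1}(n\tau)$ (for $n\mid N$) are harmonic Maass functions on $\Gamma_0(N)$ whose principal part at every cusp agrees with that of $j_{N,n}$, so the same uniqueness mechanism---now applied in the $\tau$-variable---forces equality.

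\textbf{Ramanujan-like asymptotic (part (4)).} I expect this to be the main technical obstacle. Starting from the Poincar\'e representation of $j_{N,n}$, group the sum over the bottom rows $(c,d)$ of representatives of $\Gamma_{\infty}\backslash\Gamma_0(N)$ according to the value $\lambda = Q_\tau(c,d)$. Each class contributes a term whose modulus is essentially $\lambda^{-2}\, e^{2\pi n\,\im(\tau)/\lambda}$: exponentially large for $\lambda$ small compared with $n$, and bounded once $\lambda>n$. The main term in (4) collects the contributions with $\lambda\le n$. The delicate step is controlling the tail $\lambda>n$ by $O_\tau(n)$; this requires a polynomial-in-$\lambda$ count for $\#\{(c,d):Q_\tau(c,d)=\lambda\}$ that is effective in $\tau$ on compacta, together with a careful summation across the transition region $\lambda\sim n$ where the exponential weight has not yet become negligible. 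Once these uniform bounds are in place, summing the tail completes the proof.
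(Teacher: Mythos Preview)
Your overall strategy---construct $H_{N,z}^*$ as the analytic continuation of a weight~$2$ Poincar\'e series, read off the Fourier expansion, and then estimate---matches the paper's. Two points deserve attention.

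First, a minor technical slip in the construction: your $P_{N,z}(\tau,s)$ is summed over $\Gamma_\infty\backslash\Gamma_0(N)$, but the summand $(\gamma\tau - z)^{-2}|\gamma\tau-z|^{-2s}j(\gamma,\tau)^{-2}$ is not invariant under $\gamma\mapsto T\gamma$ (since $\gamma\tau\mapsto\gamma\tau+1$), so the coset sum is ill-defined as written. The paper instead averages over the full group $\Gamma_0(N)$ with kernel $\varphi_s(\tau,z)=y^{1+s}(\tau-z)^{-1}(\tau-\bar z)^{-1}|\tau-\bar z|^{-2s}$; this is an easy fix. Note also that the paper defines $j_{N,n}$ independently as $2\pi\sqrt{n}$ times Niebur's weight~$0$ Poincar\'e series $F_{N,-n}$ and then matches the two Fourier expansions for part~(1), whereas you take the Fourier coefficients of $H_{N,z}^*$ as the definition; either viewpoint works, and parts~(2) and~(3) are then handled in the paper by direct computation with the Niebur series rather than by principal-part matching (your matching argument in weight~$0$ would also have to track the constant term, since constants survive).

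The genuine gap is in part~(4). Your plan is to bound the tail $\lambda>n$ by a lattice-point count for $\#\{(c,d):Q_\tau(c,d)=\lambda\}$ and then sum in $\lambda$. This does not yield $O_\tau(n)$: for $\lambda\gg n$ each term has size $\asymp n/\lambda$, while the number of $(c,d)$ with $Q_\tau(c,d)\le T$ grows linearly in $T$, so the absolute tail behaves like $n\sum_{\lambda>n}\lambda^{-1}$, which diverges. Equivalently, the Niebur series at $s=1$ sits exactly on the boundary of absolute convergence, and one must exploit cancellation in the phase $e(-n\operatorname{Re}(M\tau))$. The paper rewrites the claim as
\[
j_{N,n}(\tau)=\sum_{\substack{M\in\Gamma_\infty\backslash\Gamma_0(N)\\ n\operatorname{Im}(M\tau)\ge\operatorname{Im}(\tau)}} e^{-2\pi i n M\tau}+O_\tau(n)
\]
and then invokes the Weil bound for Kloosterman sums (which appear in the Fourier expansion of $F_{N,-n}$) to control the remainder. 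Some such cancellation input is essential; without it the tail estimate fails.
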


\smallskip
\noindent
{\it Some Remarks.}

\noindent
(1) In Theorem \ref{AKNGeneralization} (1), the inequality on $\im(\tau)$ is required for convergence.
\smallskip

\noindent
 (2)   For $N=1$, we have that $j_{1,n}(\tau)=j_n(\tau),$
and that
$H_{1,z}^*(\tau)= H_z(\tau) + \frac{3}{\pi \mathrm{Im}(\tau)}- E_{2}(\tau)$.
 \smallskip


\noindent
(3) Theorem~\ref{AKNGeneralization} (4) gives asymptotics for $j_{N,n}(z)$ in $n$-aspect.

\smallskip
\noindent
(4) If $y\geq \im \left(Mz\right)$ for all $M\in\Gamma_0(N)$, then
\begin{equation*}
j_{N,n}(z)\approx e^{-2\pi inz}+\sum_{\substack{c\geq 1\\ N\mid c}} \sum_{\substack{d\in \Z\\ \gcd(c,d)=1\\ |cz+d|^2=1}} e\left(n\frac{d-a}{c}\right) e^{2\pi in\overline{z}}.
\end{equation*}

\smallskip

We now turn to those $H_{N,\varrho}^*$ where $\varrho$
is a cusp of $X_0(N)$. These functions are compatible with the $H_{N,z}^*$ considered in Theorem~\ref{AKNGeneralization}.  Indeed, since $z\mapsto H_{N,z}^*(\tau)$ is continuous (even harmonic) and $\Gamma_0(N)$-invariant, it follows that
\begin{equation}\label{eqn:Hrhodef}
H_{N,\varrho}^*(\tau):=\lim_{z\to \varrho} H_{N,z}^*(\tau).
\end{equation}
The next result (cf. Theorem 1.2 of \cite{FirstPaper}) summarizes the properties of these functions. We use the Kloosterman sums $K_{i\infty, \varrho}(0,-n;c)$ of \eqref{Krhodef} and the weight $2$ harmonic Eisenstein series $E_{2,N,\varrho}^*(\tau)$ with constant term $1$ at $\varrho$ and vanishing at all other cusps.

\begin{theorem}\label{thm:HtauCusp}
For every cusp $\varrho$ of $\Gamma_0(N)$, we have that $H_{N,\varrho}^*(\tau)=-E_{2,N,\varrho}^*(\tau)$.  Moreover, the following are true.
\begin{enumerate}[leftmargin=*, label={\rm(\arabic*)}]
\item We have
\begin{align*}
	H_{N,\varrho}^* (\tau)
	&\hphantom{:}= \frac{3}{\pi\left[\SL_2(\Z):\Gamma_0(N)\right]\im(\tau)}-\delta_{\varrho,\infty}+\sum_{n=1}^\infty  j_{N,n}(\varrho) e^{2\pi in\tau}, \qquad \text{ with}\\
	j_{N,n}(\varrho)
	&:=\lim_{\tau\to \varrho}j_{N,n}(\tau)=\frac{4\pi ^2 n}{\ell_\varrho}\sum_{\substack{c\geq 1\\ N|c}}\frac{K_{i\infty, \varrho}(0,-n;c)}{c^2},
\end{align*}
	where $\delta_{\varrho,\infty}:=1$ if $\varrho =i\infty$ and $0$ otherwise.
\item  For $\gcd(N,n)=1$, we have
$
j_{N,n}(\varrho) =\lim_{\tau\to \varrho}j_{N,1}(\tau) \ |\ T(n).
$
\item For $n\mid N$, we have
$
j_{N,n}(\varrho)= \lim_{\tau\to \varrho} j_{\frac{N}{n},1}(n\tau).
$
\end{enumerate}
\end{theorem}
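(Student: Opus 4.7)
The plan is to first establish $H_{N,\varrho}^*(\tau) = -E_{2,N,\varrho}^*(\tau)$ via a uniqueness argument, and then to deduce the explicit expansion in (1) and the Hecke compatibilities (2), (3) as consequences.

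For the identification, I would invoke the uniqueness clause of Theorem \ref{AKNGeneralization}. For $z \in \mathbb{H}$, the form $H_{N,z}^*$ is a weight $2$ polar harmonic Maass form on $\Gamma_0(N)$ whose only singularities are the simple pole at $z$ and its $\Gamma_0(N)$-translates, with vanishing constant terms at every cusp. As $z \to \varrho$ these poles collapse into the cusp orbit of $\varrho$, so the pointwise limit $H_{N,\varrho}^*$ is harmonic on all of $\mathbb{H}$, vanishes at every cusp other than $\varrho$, and acquires a nontrivial constant term at $\varrho$ from the absorbed principal part. The space of weight $2$ harmonic Maass forms with such cusp support is one-dimensional, spanned by $E_{2,N,\varrho}^*$, forcing $H_{N,\varrho}^* = c_\varrho E_{2,N,\varrho}^*$. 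To pin down $c_\varrho = -1$, I would conjugate $H_{N,z}^*$ by a scaling matrix sending $\varrho$ to $i\infty$ and track how the pole of $H_{N,z}^*$ at $z$ contributes to the zeroth Fourier coefficient at $\varrho$ in the limit $z \to \varrho$; matching normalizations with $E_{2,N,\varrho}^*$ fixes the sign.

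For part (1), the Fourier expansion of $-E_{2,N,\varrho}^*$ at $i\infty$ is obtained by the standard Poincar\'e-style unfolding. The non-holomorphic piece $\frac{3}{\pi [\SL_2(\Z):\Gamma_0(N)]\im(\tau)}$ is the universal weight $2$ completion term (the coefficient is fixed by the index of $\Gamma_0(N)$), while the constant term $-\delta_{\varrho,\infty}$ is immediate from the normalization that $E_{2,N,\varrho}^*$ has constant term $1$ at $\varrho$ and vanishes at every other cusp. For the non-constant coefficients, the Bruhat decomposition of $\Gamma_\varrho \bs \Gamma_0(N) / \Gamma_\infty$ yields an orbit sum over $c \geq 1$ with $N \mid c$, the sum over $d \bmod c$ assembles into the Kloosterman sum $K_{i\infty,\varrho}(0,-n;c)$, the cusp width $\ell_\varrho$ enters through the scaling matrix, and the $c^{-2}$ decay comes from the weight $2$ Bessel factor; identifying these coefficients with $j_{N,n}(\varrho) = \lim_{\tau \to \varrho} j_{N,n}(\tau)$ finishes (1).

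The Hecke relations (2) and (3) reduce to their $\mathbb{H}$-counterparts in Theorem \ref{AKNGeneralization}(2), (3) by interchanging the limit $\tau \to \varrho$ with a finite operation. The operator $T(n)$ for $\gcd(N,n)=1$ is a finite sum of substitutions $\tau \mapsto (a\tau+b)/d$ with $ad = n$, each continuous up to the cusps of $X_0(N)$; similarly $j_{N/n,1}(n\tau)$ is continuous up to cusps. Passing to the limit term-by-term yields the asserted identities. The principal obstacle is Step 1: controlling $\lim_{z \to \varrho} H_{N,z}^*$ rigorously when $H_{N,z}^*$ has a genuine pole at $z$, and in particular showing that the pole escaping to the cusp contributes exactly $-E_{2,N,\varrho}^*$ with the correct sign and normalization. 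This is where essentially all the substantive work resides; parts (1), (2), and (3) then follow from this identification together with already-established properties of Eisenstein series and the Hecke system $\{j_{N,n}\}$.
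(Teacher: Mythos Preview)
Your uniqueness argument in Step~1 has a genuine gap. You assert that the space of weight~$2$ harmonic Maass forms on $\Gamma_0(N)$ that are regular on $\H$, vanish at all cusps other than $\varrho$, and have a prescribed constant term at $\varrho$ is one-dimensional, spanned by $E_{2,N,\varrho}^*$. This is false whenever $X_0(N)$ has positive genus: any weight~$2$ cusp form $f\in S_2(\Gamma_0(N))$ vanishes at every cusp and is trivially annihilated by $\Delta_2$, so $E_{2,N,\varrho}^*+f$ lies in the same space. Thus the abstract characterization you invoke determines $H_{N,\varrho}^*$ only up to an element of $S_2(\Gamma_0(N))$, and you have not explained how to rule out such a cusp-form contribution. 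One way to repair this would be to show that the $H_{N,z}^*$ are orthogonal to cusp forms (a property inherited from the Poincar\'e series construction, cf.\ Theorem~\ref{Psithm}) and that this orthogonality survives the limit $z\to\varrho$; but you do not mention this, and making it precise requires essentially the same explicit control of Fourier coefficients that you were hoping to bypass.

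The paper proceeds in the opposite order and thereby sidesteps the issue entirely. It first writes down the Fourier expansion of $H_{N,\varrho}^*$ directly, using the explicit expansion of $z\mapsto y\Psi_{2,N}(\tau,z)$ already established (Lemma~5.4 of \cite{BK}), and then compares term by term with the known Fourier expansion of $E_{2,N,\varrho}^*$ due to Smart. The identification $H_{N,\varrho}^*=-E_{2,N,\varrho}^*$ is then a consequence of part~(1) rather than a prerequisite for it, and no dimension count is needed. Your treatment of parts~(2) and~(3) by passing a finite Hecke sum through the limit is fine and matches the paper's ``straightforward calculation''.
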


\begin{remark}
The 
Fourier expansion in Theorem \ref{AKNGeneralization} (1) is not valid as $z\to i\infty$.
\end{remark}

These results are special cases in weight 2 of the theory of polar harmonic Maass forms. This paper is meant to be an introduction of the
theory with the applications above in mind as motivation for their study. 
In Section~\ref{PolarHarmonicMaass:Section} we provide a general overview of the theory. We offer definitions, essential properties,
and explicit constructions of these forms in some detail. Then in Section~\ref{WeightTwo} we consider the special case of weight 2, and we give
a brief sketches of the proofs of Theorems~\ref{AKNGeneralization} and \ref{thm:HtauCusp}. Finally, in Section~\ref{FinalRemarks} we
summarize recent work on formulas of Ramanujan and Green's functions.

\section{General theory of polar harmonic Maass forms}\label{PolarHarmonicMaass:Section}
We begin with the definition of a polar harmonic Maass form. The difference from usual harmonic Maass forms is that we also allow singularites in $\H$. For more on the theory of harmonic Maass forms, the interested reader is also referred to the survey \cite{KenSurvey} and the forthcoming textbook \cite{AMSBook}. Throughout, we let $\tau=u+iv$, with $u, v\in \R$ (resp.  $z=x+iy$, with $x, y\in \R$). 
As usual, for $M=\left(\begin{smallmatrix}a&b\\c&d\end{smallmatrix}\right)\in \SL_2(\Z)$ and $k\in\Z$, we define $j(M,\tau):=c\tau+d$ and let 
\[
{{
F|_{k}
}}
M(\tau) :=j(M,\tau)^{-k}F\left(\frac{a\tau+b}{c\tau+d}\right)
\]
denote the weight $k$ \begin{it}slash operator\end{it}. 
Whenever we apply the slash operator to a function with multiple variables, we resolve the ambiguity by writing, e.g., $|_{k,\tau} M$.

\begin{definition}\label{PolarHMFDefn}
For $k\in\mathbb{Z}$, a {\it polar harmonic Maass form of weight $k$ on $\Gamma_0(N)$} is a function $F\colon\mathbb{H}\to\mathbb{C}$ which is real-analytic outside a discrete set of points and satisfies the following conditions: 
\begin{itemize}
\item[i)] For every $M\in\Gamma_0(N)$, we have $F|_{k}M=F$.
\item[ii)]
The function $F$ is annihilated by the \begin{it}weight $k$ hyperbolic Laplacian\end{it} 
$$
\Delta_{k}:= -v^2\left(\frac{\partial^2}{\partial u^2} + \frac{\partial^2}{\partial v^2}\right)+i kv \left(\frac{\partial}{\partial u} +i\frac{\partial}{\partial v}\right).
$$
\item[iii)] For every $z\in\mathbb{H}$, there exists an $n\in\mathbb{N}_0$ such that $(\tau-z)^nF(\tau)$ is bounded in some neighborhood of $z$.
\item[iv)] The function $F$ grows at most linearly exponentially at the cusps of $\Gamma_0(N)$.
\end{itemize} 
We denote by $\mathcal{H}_{k}(\Gamma_0(N))$ the space of polar harmonic Maass forms of weight $k$ with respect to $\Gamma_0(N)$.
\end{definition}

\begin{remark}
Analogous definitions may be made for half-integral weight, on arbitrary congruence subgroups, and for forms with multiplier systems.
\end{remark}

Polar harmonic Maass forms of weight $k\leq 0$ have a natural decomposition into holomorphic and non-holomorphic parts. To describe these, we require their Fourier expansion around a cusp $\varrho$ of $\Gamma_0(N)$.   

\begin{proposition}\label{FourierExpansion}
Suppose that the cusp width  of $\varrho$ is $\ell_{\varrho}$ and choose $M_{\varrho}$ such that $M_{\varrho}\varrho=i\infty$. If $k\leq 0$, then the Fourier expansion of $F$ at $\varrho$ has the shape (convergent for $v \gg 0$) 
$$
F_{\varrho}(\tau):=F\big|_{k} M_{\varrho}(\tau)=F_{\varrho}^+(\tau) + F_{\varrho}^-(\tau),
$$
where, for some $c_{F,\varrho}^{\pm}(n)\in\C$, 
$$
F_{\varrho}^+(\tau):=\sum_{n\gg -\infty} c_{F,\varrho}^+(n) e^{\frac{2\pi i n\tau}{\ell_{\varrho}}},\quad
F_{\varrho}^-(\tau):=c_{F,\varrho}^-(0) v^{1-k}+\sum_{\substack{n\ll \infty\\ n\neq 0}} c_{F,\varrho}^-(n) \Gamma\left(1-k,-\frac{4\pi n v}{\ell_{\varrho}}\right)e^{\frac{2\pi i n\tau}{\ell_{\varrho}}},
$$
with the incomplete gamma function $\Gamma(a,w):=\int_{w}^{\infty} t^{a-1} e^{-t} dt$. 
\end{proposition}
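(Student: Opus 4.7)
The plan is to exploit translation invariance at the cusp $\varrho$ to produce a Fourier expansion in the real variable $u$, then feed this into the harmonicity equation $\Delta_k F_\varrho = 0$ to obtain a second-order ODE in $v$ for each Fourier coefficient, and finally to invoke condition (iv) to restrict the admissible solutions.

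First, since the stabilizer of $\varrho$ in $\Gamma_0(N)$ is generated (modulo $\pm I$) by a parabolic element whose conjugate by $M_\varrho$ is the translation $\tau \mapsto \tau + \ell_\varrho$, the modular invariance $F|_k M = F$ passes through the slash action to give $F_\varrho(\tau + \ell_\varrho) = F_\varrho(\tau)$. By condition (iii) together with the isolation of any singularities from the cusp, $F_\varrho$ is real-analytic for $v$ sufficiently large, and so admits a convergent Fourier expansion
\begin{equation*}
F_\varrho(\tau) = \sum_{n \in \Z} a_n(v) \, e^{2\pi i n u/\ell_\varrho}.
\end{equation*}

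Because $\Delta_k$ is built from the $\SL_2(\R)$-invariant Casimir, it commutes with the weight $k$ slash action, so $\Delta_k F_\varrho = 0$. Substituting the Fourier expansion and separating variables yields, for each $n \in \Z$, the ordinary differential equation
\begin{equation*}
v^2 a_n''(v) + kv\, a_n'(v) + \left(\frac{2\pi k n v}{\ell_\varrho} - \frac{4\pi^2 n^2 v^2}{\ell_\varrho^2}\right) a_n(v) = 0.
\end{equation*}
For $n = 0$ this is an Euler equation, with fundamental solutions $1$ and $v^{1-k}$ (using $k \leq 0$, so that $1-k \neq 0$). For $n \neq 0$, direct substitution shows that $a_n(v) = e^{-2\pi n v/\ell_\varrho}$ solves the ODE, accounting for the holomorphic term $e^{2\pi i n \tau/\ell_\varrho}$; a second, linearly independent solution is $a_n(v) = \Gamma(1-k,\, -4\pi n v/\ell_\varrho)\, e^{-2\pi n v/\ell_\varrho}$, as one verifies using $\tfrac{d}{dw}\Gamma(s,w) = -w^{s-1} e^{-w}$.

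Finally, I would invoke condition (iv), which says $|F_\varrho(\tau)| \ll e^{Cv}$ for some $C > 0$ as $v \to \infty$. Since $|e^{2\pi i n \tau/\ell_\varrho}| = e^{-2\pi n v/\ell_\varrho}$ grows exponentially exactly when $n < 0$, only $n \gg -\infty$ can contribute to the holomorphic part. An analogous asymptotic analysis of the non-holomorphic solution (cleanest when $1-k \in \N$, where $\Gamma(1-k,w)$ is $e^{-w}$ times a polynomial in $w$, giving growth rate $e^{2\pi n v/\ell_\varrho}$ for the full non-holomorphic summand) shows that it grows exponentially as $v \to \infty$ precisely when $n > 0$ is large, forcing $n \ll \infty$ in the non-holomorphic sum. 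The main obstacle in this proof is the careful sign and branch bookkeeping required to pin down the second solution for $n \neq 0$ in the exact form $\Gamma(1-k, -4\pi n v/\ell_\varrho) e^{2\pi i n \tau/\ell_\varrho}$ and to match its asymptotics against condition (iv); once the separation-of-variables ODE is solved, the rest is routine.
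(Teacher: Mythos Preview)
Your proposal is correct and follows essentially the same route as the paper's sketch: Fourier expand using periodicity at the cusp, feed each coefficient into $\Delta_k=0$ to obtain a second-order ODE in $v$, identify the two independent solutions (constants $1$ and $v^{1-k}$ for $n=0$, and $e^{-2\pi n v/\ell_\varrho}$ together with $\Gamma(1-k,-4\pi n v/\ell_\varrho)e^{-2\pi n v/\ell_\varrho}$ for $n\neq 0$), then invoke condition (iv) to truncate the sums. The only cosmetic difference is that the paper substitutes $w=2\pi n v$ to normalize the ODE to $C''(w)-C(w)+\tfrac{k}{w}(C'(w)+C(w))=0$ before naming the solutions, whereas you work directly in $v$; the content is identical.
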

\begin{proof}[Sketch of proof]
By condition i) in Definition \ref{PolarHMFDefn}, any $F \in  \mathcal{H}_k(\Gamma_0(N))$ has an expansion of the form
\begin{align}\label{FexpHMFgeneral}
F(\tau) =: \sum_{n\in\mathbb Z} c_{f}(n,v)e\left(nu\right).
\end{align}
If $n=0$, then we have that $c_f(0,v)$ is a linear combination of $1$ and $v^{1-k}$. If $n\neq0$, let $C(2\pi n v) := c_f(n,v)$.  Applying $\Delta_k$ to (\ref{FexpHMFgeneral}), one finds that the Fourier coefficients $C(w)$ satisfy the differential equation
\begin{align} \label{FcoeffHMFDE}
\frac{\partial^2}{\partial w^2}C(w) - C(w) + \frac{k}{w} \left(\frac{\partial}{\partial w}C(w) + C(w)\right) = 0.
\end{align}
Thus, if $n\neq0$, equation (\ref{FcoeffHMFDE}) has two linearly independent solutions, namely $e^{-w}$ and $\Gamma(1-k,-2w)e^{-w}$.   
The restrictions in the summations follow from condition iv) in \ref{PolarHMFDefn}.
\end{proof}

We call $F_{\varrho}^+$ the
\begin{it}holomorphic part\end{it} of $F$ at $\varrho$ 
and $F_{\varrho}^-$ the \begin{it}non-holomorphic part\end{it}. 
We call the terms of the Fourier expansion which grow towards $\varrho$ the \begin{it}principal part at $\varrho$\end{it}.\\

The coefficients $c_{F,\varrho}^-(n)$ are closely related to coefficients of meromorphic modular forms of weight $2-k$.  Indeed, this relationship follows from the fact that the hyperbolic Laplacian splits as
$$
\Delta_k=-\xi_{2-k}\circ \xi_k,
$$
where $\xi_{k}:=2iv^{k} \overline{\frac{\partial}{\partial \overline{\tau}}}$.  If $F$ satisfies weight $k$ modularity, then $\xi_{k}(F)$ is modular of weight $2-k$, and thus $\xi_{k}$ maps weight $k$ polar harmonic Maass forms to weight $2-k$ meromorphic modular forms. It is natural to consider the subspace $\mathcal{H}_{k}^{\operatorname{cusp}}(\Gamma_0(N))\subset \mathcal{H}_{k}(\Gamma_0(N))$ consisting of those $F$ for which $\xi_{k}(F)$ is a cusp form.  \\

We next consider {\it elliptic expansions} of polar harmonic Maass forms. For this purpose, it is natural to first look at elliptic expansions of meromorphic modular forms of weight $k$ (attached to points on the upper half plane), as opposed to the more common Fourier series in $q$ (which are expansions around the cusp at $i\infty$). These are of the shape 
\begin{equation*}\label{eqn:fellexp}
f(\tau)=(\tau-\overline{z})^{-k}\sum_{n\gg -\infty} c_{f,z}(n) X_{z}^n(\tau)
\end{equation*}
and converge if 
\begin{equation}
X_z(\tau) := \frac{\tau-z}{\tau-\overline{z}}
\end{equation}
is sufficiently small.  Just as there are two parts to the Fourier expansion at a cusp, polar harmonic Maass forms have expansions around points in the upper half-plane which break into two pieces. 
 
 \begin{proposition}\label{EllipticExpansion}
A  polar harmonic Maass form $F$ of weight $k\leq 0$ has an expansion around each point $z\in\mathbb H$ of the form $F=F^+_{z}+F^-_{z}$,
where the {\it meromorphic part} $F^+_{z}$ is given by
\begin{equation}\label{mp}
F^+_{z}(\tau):=(\tau-\overline{z})^{-k}\sum_{n\gg-\infty}c_{F,z}^+(n)X_{z}^n(\tau)
\end{equation}
and the {\it non-meromorphic part} $F^-_{z}$ by
\begin{equation}\label{nmp}
F^-_{z}(\tau):=(\tau-\overline{z})^{-k}\sum_{n\ll\infty}c_{F,z}^-(n)\beta_0\left(1-|X_{z}(\tau)|^2;1-k,-n\right)X_{z}^n(\tau)
.
\end{equation}
These expressions converge for $|X_{z}(\tau)|\ll 1$.
Here, we have that
$$
\beta_0\left(w; a,b\right):=\beta\left(w; a,b\right)-\mathcal{C}_{a,b} \hspace{7mm} \text{ with }\hspace{7mm}
\mathcal{C}_{a,b}:=\sum_{\substack{0\leq j\leq a-1\\ j\neq -b}} \binom{a-1}{j}\frac{(-1)^j}{j+b},
$$
where the {\it incomplete $\beta$-function} is defined by
$
\beta({w};a,b):=\int_0^{w} t^{a-1} (1-t)^{b-1} dt.
$
\end{proposition}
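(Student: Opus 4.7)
My plan is to mimic the proof of Proposition~\ref{FourierExpansion}, replacing the cusp by the interior point $z\in\HH$ and the local uniformizer $e^{2\pi i\tau/\ell_\varrho}$ by the elliptic variable $X:=X_z(\tau)=\frac{\tau-z}{\tau-\overline{z}}$, which conformally maps $\HH$ onto the open unit disk with $z\mapsto 0$. First I absorb the weight by setting $G(X,\overline{X}):=(\tau-\overline{z})^{k}F(\tau)$. Outside the discrete singularity set of $F$ (in particular, away from $z$) this $G$ is real-analytic; writing $X=re^{i\theta}$, I expand
\[
G\!\left(re^{i\theta}\right)=\sum_{n\in\Z}g_n(r)\,e^{in\theta}.
\]

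Next I translate the harmonicity condition $\Delta_{k}F=0$ into an ODE for each $g_n$. The identity $v=\operatorname{Im}(z)(1-|X|^{2})/|1-X|^{2}$ together with the chain rule converts $\Delta_{k}$, after conjugation by $(\tau-\overline{z})^{-k}$, into a second-order operator on the disk whose angular Fourier separation produces, for each $n$, a linear ODE in $r$ for $g_n(r)$. One solution is $g_n(r)=r^n$: indeed, $(\tau-\overline{z})^{-k}X_z(\tau)^{n}$ is $\tau$-meromorphic, hence annihilated by $\xi_{k}$ and therefore by $\Delta_{k}=-\xi_{2-k}\circ\xi_{k}$. Writing the second solution in the ansatz $g_n(r)=r^n h(1-r^2)$, reduction of order collapses the ODE for $h$ to a first-order equation of the form
\[
h'(w)=c_{n,k}\,w^{-k}(1-w)^{-n-1}
\]
for an explicit constant $c_{n,k}$; integrating gives, up to normalization, the incomplete $\beta$-function $\beta(w;1-k,-n)$. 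For $n\geq 0$ the integrand has a non-integrable singularity at $w=1$, and subtracting the explicit polar constant $\mathcal{C}_{1-k,-n}$ yields the convergent regularization $\beta_0(w;1-k,-n)$. This choice of antiderivative is precisely what gives a second solution defined uniformly for all $n\in\Z$.

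Finally, I read off the summation ranges from condition iii) in Definition~\ref{PolarHMFDefn}. The divergence of $X_z^n$ as $|X_z|\to 0$ for $n\to-\infty$ forces $n\gg-\infty$ in $F^+_z$, while the growth of $\beta_0(1-|X_z|^{2};1-k,-n)$ as $|X_z|\to 0$ (equivalently $w\to 1$) for $n\to+\infty$ forces $n\ll\infty$ in $F^-_z$; the two series then converge for $|X_z(\tau)|\ll 1$. The principal obstacle is the radial ODE step, and in particular the identification of the second solution as $\beta_0(1-r^{2};1-k,-n)\,r^n$ together with the correct regularization constant $\mathcal{C}_{1-k,-n}$. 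Unlike the cuspidal case, whose two solutions $e^{-w}$ and $\Gamma(1-k,-2w)e^{-w}$ are standard, here one must carefully regularize the incomplete $\beta$-function to obtain a single family of non-meromorphic solutions valid for every integer $n$.
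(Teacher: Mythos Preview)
Your proposal is correct and follows essentially the same approach as the paper: both pass to $(\tau-\overline{z})^{k}F(\tau)$, separate angular variables on the disk, identify the two radial solutions as $r^{n}$ and $r^{n}\beta(1-r^{2};1-k,-n)$, and then invoke condition~iii) of Definition~\ref{PolarHMFDefn} to truncate the sums. The only cosmetic difference is that the paper substitutes $a_{r}(n)=r^{n}b_{n}(r^{2})$ and records the resulting ODE in $t=r^{2}$ as $(1-t)t\,b_{n}''(t)+\bigl(n+1-(k+n+1)t\bigr)b_{n}'(t)=0$ (already first order in $b_{n}'$), whereas you reach the equivalent equation $h'(w)=c_{n,k}\,w^{-k}(1-w)^{-n-1}$ via reduction of order in $w=1-r^{2}$.
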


We refer to the terms 
in \eqref{mp} and \eqref{nmp} 
which grow as $\tau\to z$ as the \begin{it}principal part of $F$ around $z$\end{it}. 

\begin{remarks}

\ \ \ \newline
\noindent
(1)  If $F\in \mathcal{H}_{k}^{\operatorname{cusp}}(\Gamma_0(N))$, then \eqref{nmp} only runs over $n<0$.

\noindent
(2) If $F\in \mathcal{H}_{k}(\Gamma_0(N))$, then the sums in \eqref{mp} and \eqref{nmp} run only over those $n$ which satisfy $n\equiv -k/2\pmod{e_{N,z}^{-1}}$, where $e_{N,z}:=2/\#\Stab_z(\Gamma_0(N))$.  
\end{remarks}

\begin{proof}[Sketch of proof of Proposition \ref{EllipticExpansion}]
For $w:=X_{z}(\tau)$ sufficiently small, the function
$$
\left( \frac{2iy}{1-w}\right)^{k} F \left( \frac{z-\overline{
z}w}{1-w}\right)=\left(\tau-\overline{z}\right)^{k} F(\tau).
$$
has an expansion of the shape $(w=re^{i\theta})$
\begin{equation*}
\sum_{n\in\Z} a_r(n) e^{in\theta}
\end{equation*}
We then define a function $b_n$ by $a_r(n)=: r^nb_n(r^2)$.  A straightforward but lengthy calculation yields that  
\begin{equation}\label{bdiff}
0=(1-t)t b_n''(t) + \left( n+1 - (k+n+1)t\right) b_n'(t).
\end{equation}
The general solution of \eqref{bdiff} is given by
\[
b_n(t)=c_1\beta(1-t; 1-k, -n)+c_2
\]
with $c_1, c_2\in\C$.  Hence, noting that $\beta(1;1-k,-n)$ exists for $n<0$, we conclude that
\begin{multline*}
\left(\tau-\overline{z}\right)^{k}F(\tau)=\sum_{n\in\Z} a_n w^n - \sum_{n\geq 0} b_n \beta\left(1-r^2;1-k,-n\right) w^n\\
+ \sum_{n<0} b_n \left( \beta(1;1-k,-n)-\beta\left(1-r^2;1-k,-n\right)\right)w^n,
\end{multline*}
for some $a_n$ and $b_n \in\C$. From condition iii) in Definition \ref{PolarHMFDefn}, we obtain that $a_n=0$ if $n<-n_0$ and $b_n=0$ if $n> n_0$.
\end{proof}

Following Bruinier and Funke, for $k\in\N_0$, $g\in S_{k}(\Gamma_0(N))$, and $F\in \mathcal{H}_{2-k}^{\operatorname{cusp}}(\Gamma_0(N))$, we define the \textit{pairing}
\begin{equation*}
\{g, F\}:=\left<g, \xi_{2-k}(F)\right>,
\end{equation*}
where for $g,h\in S_{k}(\Gamma_0(N))$, $\left<g,h\right>$ denotes the standard \begin{it}Petersson inner product\end{it}.  The pairing $\{g,F\}$ was computed for $F$ a harmonic Maass form in \cite{BF}. We recall an extension to the entire space $\mathcal{H}_{2-k}^{\operatorname{cusp}}(\Gamma_0(N))$.
\begin{proposition}[Proposition 6.1 of \cite{BK}]\label{expansionprod}
If $g\in S_{k}(\Gamma_0(N))$ and $\mathcal{F}\in\mathcal{H}_{2-k}^{\operatorname{cusp}}(\Gamma_0(N))$, then
\begin{equation*}
\{g, F\}=\frac{1}{\left[\SL_2(\Z):\Gamma_0(N)\right]}\sum_{n\geq 1}\left(\sum_{z\in\Gamma_0(N)\backslash\H} \frac{\pi  e_{N,z} }{y}c_{F,z}^+\left(-n\right) c_{g,z}\left(n-1\right) +\sum_{\varrho\in\mathcal{S}_N} c_{F,\varrho}^{+}(-n)c_{g,\varrho}(n)\right),
\end{equation*}
where $\mathcal{S}_N$ denotes the set of inequivalent cusps of $\Gamma_0(N)$.
\end{proposition}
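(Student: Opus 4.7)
The natural strategy is to extend the Bruinier--Funke unfolding argument from \cite{BF} to allow for singularities in $\mathbb{H}$, by applying Stokes' theorem on a truncated fundamental domain. I would first rewrite the pairing as an integral of an exact form: since $\xi_{2-k}(F)=2iv^{2-k}\overline{\partial_{\bar{\tau}}F}$ and $g$ is holomorphic, the identity
\[
d\bigl(g(\tau)F(\tau)\,d\tau\bigr)=2i\,g(\tau)\,\partial_{\bar{\tau}}F(\tau)\,du\,dv
\]
together with the definition of the Petersson inner product yields
\[
\{g,F\}=\frac{-1}{\left[\SL_2(\Z):\Gamma_0(N)\right]}\int_{\Gamma_0(N)\backslash\H}d(gF\,d\tau).
\]
One then chooses a fundamental domain $\mathcal{F}_N$ for $\Gamma_0(N)$, excises small geodesic disks $\mathcal{B}_\varepsilon(z_j)$ around each singularity $z_j$ of $F$ in $\mathcal{F}_N$, and truncates at height $T$ near each cusp $\varrho$ (after conjugating by $M_\varrho$). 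Stokes' theorem reduces the pairing to the sum of the boundary integrals over these small circles and the horocycles at the cusps, which I would compute separately and then let $\varepsilon\to 0$ and $T\to\infty$.

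For the elliptic contribution at an interior singularity $z$, I would change variables to $w:=X_z(\tau)$, so that a small hyperbolic disk around $z$ becomes a small Euclidean disk $|w|=\varepsilon$. The elliptic expansion of Proposition~\ref{EllipticExpansion} gives the principal part of $F$ as a Laurent-type series in $w$, while $g\in S_k(\Gamma_0(N))$ admits a Taylor-type expansion
\[
g(\tau)=(\tau-\overline{z})^{-k}\sum_{n\geq 0}c_{g,z}(n)X_z^{n}(\tau).
\]
Substituting these into $\oint gF\,d\tau$ and using that $d\tau=\frac{2iy}{(1-w)^2}\,dw$, the contour integral picks out only the residue at $w=0$; the non-meromorphic part $F_z^-$ contributes nothing in the limit because $\beta_0(1-|w|^2;1-k,-n)$ vanishes to sufficient order as $|w|\to 0$ for the relevant range of $n$. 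The residue produces exactly $\frac{\pi e_{N,z}}{y}\sum_{n\geq 1}c_{F,z}^+(-n)c_{g,z}(n-1)$; the factor $e_{N,z}$ appears because the chosen fundamental domain wraps $2/e_{N,z}$ times around a point with non-trivial stabilizer, so a full loop in $w$ corresponds to only $e_{N,z}$ copies of the quotient.

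For the contribution at a cusp $\varrho$, I would conjugate by $M_\varrho$ to move $\varrho$ to $i\infty$ with width $\ell_\varrho$, and use Proposition~\ref{FourierExpansion} together with the standard $q$-expansion $g_\varrho(\tau)=\sum_{n\geq 1}c_{g,\varrho}(n)e^{2\pi i n\tau/\ell_\varrho}$. The horocycle integral at height $T$ becomes an integral over $u\in[0,\ell_\varrho]$ of $g_\varrho F_\varrho$; orthogonality of exponentials in $u$ selects the diagonal terms, the non-holomorphic part $F^-_\varrho$ decays since $g$ is cuspidal, and letting $T\to\infty$ leaves precisely $\sum_{n\geq 1}c_{F,\varrho}^+(-n)c_{g,\varrho}(n)$. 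Summing the interior and cuspidal contributions and dividing by the index gives the claimed formula. The main obstacle I anticipate is bookkeeping the elliptic residue: tracking the powers of $(\tau-\overline{z})$, the Jacobian of the $w$-substitution, and the stabilizer factor $e_{N,z}$ simultaneously, while verifying that the non-meromorphic part $F_z^-$ indeed makes no boundary contribution in the $\varepsilon\to 0$ limit, is the most delicate piece of the argument.
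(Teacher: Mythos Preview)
The paper does not actually prove this proposition; it simply quotes it as Proposition~6.1 of \cite{BK} and moves on. Your proposal is exactly the standard argument one expects (and the one carried out in \cite{BK}): extend the Bruinier--Funke Stokes-theorem computation from \cite{BF} by additionally excising small disks around the interior singularities, and read off the new contributions via the elliptic expansion of Proposition~\ref{EllipticExpansion}. So your approach matches the intended one.

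One small point of bookkeeping to watch in your write-up: your verbal explanation of the factor $e_{N,z}$ is slightly inverted. Since $e_{N,z}=2/\#\Stab_z(\Gamma_0(N))$, the fundamental domain near an elliptic fixed point subtends an angle of $2\pi e_{N,z}$, so the boundary circle you integrate over is only an $e_{N,z}$-fraction of a full loop in $w$ (equivalently, a full loop in $w$ covers $e_{N,z}^{-1}$ copies of the quotient, not $e_{N,z}$ copies). This is what produces the factor $e_{N,z}$ in front of the residue, and your formula is correct; just make sure the sentence explaining it points the right way. Apart from that, the outline is complete: the weight-$2$ invariance of $gF\,d\tau$ kills the side contributions, the vanishing order of $\beta_0(1-|w|^2;1-k,-n)$ at $|w|\to 0$ (together with the restriction $n<0$ coming from $F\in\mathcal{H}_{2-k}^{\operatorname{cusp}}$) disposes of $F_z^-$, and the cuspidal pieces are exactly as in \cite{BF}.
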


For $k>2$, the principal parts of the Fourier expansions around all cusps and the principal parts of the elliptic expansions uniquely determine the form.  Indeed, Proposition \ref{expansionprod} implies that any polar harmonic Maass form $F$ without any singularities must satisfy $\xi_{2-k}(F)=0$ and there are no non-trivial negative-weight holomorphic modular forms. \\

For $z\in\mathbb{H}$, $n\in\mathbb{Z}$, and $k\in\N_{>2}$, the {\it meromorphic elliptic Poincar\'e series} of Petersson are given by
\begin{equation}
\label{Psi}
\Psi_{k, n,N}^z(\tau):=\sum_{M\in\Gamma_0(N)}\left. \left((\tau-\overline{z})^{-k}X_{z}^n(\tau)\right)\right|_{k,\tau}M.
\end{equation}
These functions give rise to natural polar harmonic Maass forms. Namely, we have the following general theorem.

\begin{theorem}\label{Psithm}
The functions $\Psi_{k, n,N}^z$ are weight $k$ meromorphic modular forms which are cusp forms if $n\geq 0$. For $n<0$, they are orthogonal to cusp forms and have principal part around $\tau =z$ equal to 
\[
2e_{N,z} (\tau-\overline{z})^{-k}X_{z}^n(\tau)
.
\]
Furthermore, the functions $z\mapsto y^{-k-n}\Psi_{k, n,N}^z(\tau)$ are polar harmonic Maass forms of weight $-2n-k$. 
\end{theorem}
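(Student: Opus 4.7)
The plan is to verify each assertion using Petersson's framework for meromorphic Poincaré series. Throughout I write
$$
\varphi_z(\tau) := (\tau-\overline{z})^{-k}X_z^n(\tau) = (\tau-z)^n(\tau-\overline{z})^{-k-n},
$$
so that $\Psi_{k,n,N}^z = \sum_{M\in\Gamma_0(N)} \varphi_z|_{k,\tau} M$. Since $|X_z(\tau)|\leq 1$, we have $|\varphi_z(\tau)|\leq |\tau-\overline{z}|^{-k}$, and the usual majorant argument yields absolute and locally uniform convergence on $\H\setminus\Gamma_0(N)z$ for $k>2$. Weight $k$ modularity in $\tau$ is immediate from the $\Gamma_0(N)$-average. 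The seed $\varphi_z$ is holomorphic in $\tau$ when $n\geq 0$ and has a single pole of order $-n$ at $\tau=z$ when $n<0$, so $\Psi_{k,n,N}^z$ is a weight $k$ meromorphic modular form with possible poles only on $\Gamma_0(N)z$. Since $|\varphi_z(\tau)| = O(\im(\tau)^{-k})$ as $\im(\tau)\to\infty$ (and similarly after slashing into any other cusp), the Poincaré series vanishes at every cusp, so for $n\geq 0$ it is a cusp form.

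Next I would compute the principal part at $\tau = z$ and establish orthogonality when $n<0$. Only summands with $M\in\Stab_z(\Gamma_0(N))$ contribute to the singularity at $\tau=z$, since for $M\notin\Stab_z(\Gamma_0(N))$ one has $Mz\neq z$ and $\varphi_z|_{k,\tau}M$ is regular near $\tau=z$. For $M\in\Stab_z(\Gamma_0(N))$, the Möbius identities $M\tau - \overline{z} = (\tau-\overline{z})/(j(M,\tau)\overline{j(M,z)})$ (using $M\overline{z}=\overline{z}$) and $X_z(M\tau) = j(M,z)^{-2}X_z(\tau)$ (using $|j(M,z)|=1$) yield $\varphi_z|_{k,\tau}M = j(M,z)^{-k-2n}\varphi_z$; summing these contributions over $\Stab_z(\Gamma_0(N))$ and using $\#\Stab_z(\Gamma_0(N))\cdot e_{N,z} = 2$ produces the asserted principal part $2e_{N,z}\varphi_z(\tau)$. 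Orthogonality to cusp forms when $n<0$ follows by unfolding the regularized Petersson pairing: for $f\in S_k(\Gamma_0(N))$, the pairing $\langle f,\Psi_{k,n,N}^z\rangle$ collapses to a single integral of $f$ against $\overline{\varphi_z}$, which a residue/contour computation at $\tau=z$ expresses in terms of a coefficient of the elliptic expansion of $f$ at $z$ at nonnegative index; comparison with Proposition \ref{expansionprod} then forces the pairing to vanish.

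Finally, for part (4) I would verify that the rescaled seed
$$
\tilde\varphi(\tau,z) := y^{-k-n}\varphi_z(\tau) = y^{-k-n}(\tau-z)^n(\tau-\overline{z})^{-k-n}
$$
is annihilated in $z$ by $\Delta_{-2n-k,z}$ via a direct calculation applying $\partial/\partial z$ and $\partial/\partial\overline{z}$ to the three factors. For modularity in $z$ of weight $-2n-k$, the classical identities $\tau - M'z = (c'\tau-a')(z-{M'}^{-1}\tau)/j(M',z)$ (for $M' = \kzxz{a'}{b'}{c'}{d'}\in\Gamma_0(N)$), its complex-conjugated counterpart for $\tau-\overline{M'z}$, and $\im(M'z) = \im(z)/|j(M',z)|^2$ together let one rewrite $\tilde\varphi(\tau, M'z)$ as $j(M',z)^{2n+k}$ times $\tilde\varphi({M'}^{-1}\tau, z)$, up to a weight-$k$ slash factor in $\tau$ that is absorbed upon reindexing $M\mapsto MM'$ in the defining sum. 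The required growth and singularity conditions in $z$ follow from the uniform estimates used for convergence. The principal obstacle is precisely this last step: arranging the bookkeeping so that the $\Gamma_0(N)$-average in $\tau$ doubles as a genuine modular transformation in $z$ at the stated weight, carefully tracking the conjugation-related factors arising from $(\tau-\overline{z})$ that prevent a fully symmetric treatment of the two variables.
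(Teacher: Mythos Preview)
Your approach is essentially the paper's: absolute and locally uniform convergence for $k>2$ justifies checking meromorphy in $\tau$ and harmonicity in $z$ termwise, the principal part comes from the stabilizer terms, orthogonality is Petersson's Satz~8, and the $z$-modularity follows from rewriting each summand via the identity $M\tau-Mz=(\tau-z)/(j(M,\tau)j(M,z))$ and then reindexing the $\Gamma_0(N)$-sum, which is exactly the substitution the paper records. One arithmetic slip to correct: having shown that each $M\in\Stab_z(\Gamma_0(N))$ contributes $j(M,z)^{-k-2n}\varphi_z$, the stabilizer sum is $\#\Stab_z(\Gamma_0(N))\cdot\varphi_z$ (when the character sum does not vanish), and the relation $\#\Stab_z(\Gamma_0(N))\cdot e_{N,z}=2$ then gives $(2/e_{N,z})\varphi_z$, not $2e_{N,z}\varphi_z$ as you wrote.
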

\begin{remark}
By orthogonal to cusp forms we mean with respect to a regularised inner product.   
\end{remark}
\begin{proof}[Sketch of proof of Theorem \ref{Psithm}]
The series in \eqref{Psi} converges absolutely and locally uniformly in both $\tau$ and $z$ for $k>2$. Thus it can be checked termwise that the functions are meromorphic in $\tau$ and harmonic in $z$. The principal parts come from the summand corresponding to the identity matrix and orthogonality to cusp forms has been proven in Satz 8 of \cite{Petersson}. 
We now use the identity
$$
M\tau -Mz = j(M,\tau)j(M,z)(\tau -z)
$$ 
to rewrite
$$
y^{-k-n}j(M,\tau)^k  (M\tau -\overline{z})^n (M\tau -z)^{-n-k} = j(M,z)^{2n+k}\im (M^{-1}z)^{-k-n}(\tau -M^{-1}\overline{z})^n (\tau -M^{-1}z)^{-n-k}.
$$
Summing over $M$ implies that $y^{-k-n}\Psi_{k, n,N}^z(\tau)$ is modular in $z$. 
\end{proof}

\section{The Special case of weight 2}\label{WeightTwo}


In this section, we construct weight $2$ polar harmonic Maass forms $H_{N,z}^*$ which only have simple poles in $z$ modulo $\Gamma_0(N)$ and decay towards the cusps. This is achieved by using  analytic continuation to obtain analogues of $\Psi_{k,-1,N}^z$ for $k=2$. It turns out that these functions are not meromorphic in $\tau$ anymore. However, they are still polar harmonic Maass forms in $\tau$ and in $z$.  To construct them we define for $z, \tau\in \H$ and $s\in\C$ with $\re(s)>0$
\begin{equation}\label{Pns}
	P_{N, s}(\tau, z):=\sum_{M\in \Gamma_0(N)} \frac{\varphi_s\left(M\tau, z\right)}{j\left(M, \tau\right)^2|j\left(M, \tau\right)|^{2s}}
\end{equation}
with  
\begin{equation*}
	\varphi_s(\tau, z):=\mathrm{Im}(z)^{1+s}(\tau-z)^{-1}(\tau-\overline{z})^{-1}\left|\tau-\overline{z}\right|^{-2s}.
\end{equation*}

\begin{proposition}[Lemma 4.4 of \cite{BK}]\label{anacontin}
The function $P_{N,s}(\tau, z)$ has an analytic continuation denoted by $y\Psi_{2,N}(\tau, z)$ to $s=0$. We have $z\mapsto y\Psi_{2,N}(\tau,z)\in \mathcal{H}_0(\Gamma_0(N))$ and $\tau\mapsto y\Psi_{2,N}(\tau,z)\in \mathcal{H}_2(\Gamma_0(N))$.
\end{proposition}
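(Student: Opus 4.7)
The plan is to mimic Hecke's classical analytic-continuation argument for non-holomorphic Eisenstein series, adapted to the elliptic Poincar\'e kernel $\varphi_s$. At $s=0$ the series defining $\Psi_{2,N}$ sits at the edge of absolute convergence (the familiar weight-$2$ issue), and the role of the convergence factor $|j(M,\tau)|^{-2s}$ is precisely to push it into the region of absolute convergence so that a limit can be taken.

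First I would establish absolute convergence of $P_{N,s}(\tau,z)$ for $\re(s)>0$. For $\tau$ and $z$ in compacta of $\H$ and away from the $\Gamma_0(N)$-orbit of $\tau=z$, the definition of $\varphi_s$ yields $|\varphi_s(M\tau,z)|\ll_{\tau,z} 1$ uniformly in $M$ (since $|M\tau-\overline{z}|\geq \im(z)>0$), so each summand is dominated by $|j(M,\tau)|^{-2-2\re(s)}$, and the resulting weight $2+2\re(s)$ Poincar\'e sum converges for $\re(s)>0$. Hence $P_{N,s}$ is holomorphic in $s$ on the right half plane and real-analytic in $\tau,z$ off the orbit of the diagonal. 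Weight-$2$ modularity in $\tau$ is immediate from the defining sum. For modularity in $z$, I would apply the identities $M\tau-Mz=j(M,\tau)^{-1}j(M,z)^{-1}(\tau-z)$ and $\im(Mz)=\im(z)|j(M,z)|^{-2}$ to rewrite $\varphi_s(\tau,Mz)$, exactly as in the proof of Theorem~\ref{Psithm}; after reindexing the $\Gamma_0(N)$-sum in $\tau$, this shows that $\im(z)P_{N,s}(\tau,z)$ is $z$-invariant up to a factor $|j(M,z)|^{-2s}$, which is trivial at $s=0$.

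The analytic continuation to $s=0$ is the heart of the matter, and I expect it to be the main obstacle. I would unfold $P_{N,s}$ against $\Gamma_\infty\cap\Gamma_0(N)$ and compute its Fourier expansion in $\tau$ at $i\infty$. Each Fourier coefficient at frequency $n$ takes the form of a Kloosterman sum times a classical Mellin-type integral in $s$; these integrals evaluate to ratios of gamma functions (for $n=0$) and incomplete-gamma/Whittaker functions (for $n\neq 0$), all of which admit explicit meromorphic continuation past $s=0$. The apparent loss of absolute convergence at $s=0$ is absorbed into a non-holomorphic contribution to the constant term, mirroring the appearance of the $-3/(\pi v)$-correction in Zagier's $E_2^*$. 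Summing the continued coefficients yields a convergent expansion for $y\Psi_{2,N}(\tau,z):=\lim_{s\to 0}P_{N,s}(\tau,z)$ of the shape predicted by Proposition~\ref{FourierExpansion}.

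Once the continuation is in hand, membership in $\mathcal{H}_2(\Gamma_0(N))$ in the $\tau$-variable and $\mathcal{H}_0(\Gamma_0(N))$ in the $z$-variable is verified termwise: the transformation laws in both variables persist under the limit because the factors $|j(M,\tau)|^{-2s}$ and $|j(M,z)|^{-2s}$ tend to $1$ uniformly on compacta off the discrete singular locus. A direct calculation shows that $\Delta_{2,\tau}\varphi_s$ and $\Delta_{0,z}(\im(z)\varphi_s)$ are scalar multiples of the respective kernel by a polynomial in $s$ vanishing at $s=0$, so by uniform convergence of the continued expansions the Laplacians annihilate $y\Psi_{2,N}$ in each variable. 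Finally, the discrete singularities along $\tau\equiv z\pmod{\Gamma_0(N)}$ come from the identity-coset term, and the linear exponential growth at the cusps is read off from the Fourier expansion, completing the verification of the four conditions of Definition~\ref{PolarHMFDefn}.
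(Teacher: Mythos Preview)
Your strategy matches the paper's: obtain the analytic continuation by computing the Fourier expansion of $P_{N,s}$ (the paper credits the relevant decomposition of the $\Gamma_0(N)$-sum to Petersson rather than phrasing it as an unfolding, but the content is the same), and then verify the conditions of Definition~\ref{PolarHMFDefn} once the continuation is in hand.

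Two technical points deserve tightening. First, your convergence argument as written does not quite close: the sum $\sum_{M\in\Gamma_0(N)}|j(M,\tau)|^{-2-2\re(s)}$ diverges because $j(M,\tau)=1$ for $M\in\Gamma_\infty$, so the uniform bound $|\varphi_s(M\tau,z)|\ll 1$ is not enough by itself. For the $\Gamma_\infty$-block you need instead the decay $|\varphi_s(\tau+n,z)|\ll |n|^{-2-2\re(s)}$; splitting off this piece is exactly the first step of Petersson's decomposition. Second, the claim that $\Delta_{2,\tau}\varphi_s$ is a scalar multiple of $\varphi_s$ vanishing at $s=0$ is not straightforward, and in any case the extra factor $|j(M,\tau)|^{-2s}$ prevents $\Delta_{2,\tau}$ from commuting with the averaging for $s\neq 0$, so the termwise Laplacian argument is delicate. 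The safer route---and the one the paper takes---is to read harmonicity directly off the explicit Fourier expansion at $s=0$, whose coefficients visibly solve the ODE \eqref{FcoeffHMFDE}, and then invoke the modularity computation from the proof of Theorem~\ref{Psithm}.
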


\begin{remark}
Explicit Fourier expansions for $z\mapsto y\Psi_{2,N}(\tau,z)$ are given in Lemma 5.4 of \cite{BK} and for $\tau\mapsto y\Psi_{2,N}(\tau,z)$ in Proposition 2.1 of \cite{FirstPaper}.
\end{remark}

\begin{proof}[Sketch of proof of Proposition \ref{anacontin}]
 One uses a splitting of the sum in \eqref{Pns} due to Petersson and computes the Fourier expansion of each part. These converge absolutely and locally uniformly in $s$ and can be analytically continued to $s=0$. Once the analytic continuation is established, one can proceed as in the proof of Theorem \ref{Psithm}
\end{proof}

We then set
\begin{equation}\label{eqn:Hdef}
H_{N,z}^*(\tau):=-\frac{y}{2\pi}\Psi_{2,N}(\tau,z).
\end{equation}
Note that $H^*_{N,z}$ has principal part 
$$
\frac{1}{2\pi i e_{N, z}}\frac{1}{\tau-z}
$$ 
at $\tau=z$.\\

The Fourier coefficients $j_{N,n}(z)$ of $H_{N,z}^*$ are given by analytic continuations of {\it Niebur's Poincar\'e series} \cite{Niebur}. To be more precise, set for $n\in \N$ and $\re(s)>1$
\begin{align*}
	F_{N, -n, s}(z)
	:=\sum_{M\in\Gamma_\infty \backslash\Gamma_0(N)}e\left(-n\re (Mz)\right)\im(Mz)^\frac12 I_{s-\frac12}\left(2\pi n \im(Mz)\right)
\end{align*}
with $\Gamma_\infty:=\left\{\pm\begin{psmallmatrix}
	1 & n\\
	0 & 1
\end{psmallmatrix}:n\in\Z\right\}$ and $I_{\alpha}$ the usual $I$-Bessel function of order $\alpha$. These functions are {\it weak Maass forms} of weight $0$; instead of being annihilated by $\Delta_0$, they have eigenvalue $s(1-s)$.

\begin{proposition}[Theorem 1 of \cite{Niebur}]\label{Niebur}
	The function $F_{N, -n, s}$ has an analytic continuation $F_{N, -n}$ to $s=1$ and $F_{N, -n}\in \mathcal H_0 (\Gamma_0(N))$.
	\end{proposition}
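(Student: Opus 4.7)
The plan is to obtain the Fourier expansion of $F_{N,-n,s}(z)$ at each cusp of $\Gamma_0(N)$ by the standard unfolding trick, and then meromorphically continue each Fourier coefficient past $s=1$, verifying regularity at that point. Once the function is shown to be well-defined at $s=1$, the polar harmonic Maass form properties from Definition \ref{PolarHMFDefn} follow almost formally.

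Concretely, I would proceed as follows. First, at the cusp $i\infty$, decompose $\Gamma_\infty\backslash\Gamma_0(N)$ into the identity coset together with cosets parametrised by pairs $(c,d)$ with $N\mid c$, $c>0$, $\gcd(c,d)=1$, and $d$ modulo $c$. The identity coset contributes the distinguished term $e(-nu)\,v^{1/2}I_{s-1/2}(2\pi n v)$. For the remaining cosets, splitting the $d$-sum modulo $c$ produces a Kloosterman sum, while the $d$-translation over $\R$ is converted into an integral on the real line by a Lipschitz-type computation. That integral evaluates in terms of $J$- and $I$-Bessel functions in the variable $s$. The result is a Fourier series of schematic shape
\[
F_{N,-n,s}(z) = e(-nu)\,v^{1/2}I_{s-\frac12}(2\pi n v) + \sum_{m\in\Z}\Biggl(\sum_{\substack{c>0\\ N\mid c}}\frac{K(-n,m;c)}{c}\,\mathcal{I}_s(n,m,v)\Biggr)e(mu).
\]
Analogous expansions at the other cusps are obtained by conjugating with the scaling matrices $M_\varrho$.

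Second, verify that each Fourier coefficient extends meromorphically in $s$ to a neighborhood of $s=1$ and is in fact holomorphic there. This uses Weil's bound $|K(-n,m;c)|\ll c^{1/2+\varepsilon}$ together with the decay of $\mathcal{I}_s$ in the $c$-aspect to push convergence of the Kloosterman zeta sums slightly past $\re(s)=1$. The specialization at $s=1$ is clean because $I_{1/2}(x)=\sqrt{2/(\pi x)}\sinh(x)$ has a closed form, and the eigenvalue $s(1-s)$ vanishes, so the resulting Fourier expansion fits the weight $0$ harmonic shape described in Proposition \ref{FourierExpansion}.

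Third, assemble the properties required by Definition \ref{PolarHMFDefn}. Modularity under $\Gamma_0(N)$ and the eigenvalue equation $\Delta_0 F_{N,-n,s}=s(1-s)F_{N,-n,s}$ hold termwise for $\re(s)>1$ by construction, and persist under analytic continuation in $s$; specialising to $s=1$ yields $\Delta_0 F_{N,-n}=0$. Real-analyticity on all of $\H$ (hence the absence of interior singularities, so $F_{N,-n}\in \mathcal{H}_0(\Gamma_0(N))$ rather than merely a polar one) is inherited from the smoothness of each summand and uniform convergence on compacta. At the cusps, the explicit Fourier expansion provides the required at most linear exponential growth. The main obstacle is the analytic continuation step: controlling the Kloosterman contributions uniformly in $s$ so as to cross the convergence boundary $\re(s)=1$ and rule out a pole there. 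This is accomplished by the Weil bound combined with the Bessel-transform asymptotics, following Selberg's classical method as adapted by Niebur.
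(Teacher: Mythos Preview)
Your proposal is correct and follows exactly the approach indicated in the paper: the paper's entire sketch consists of the single sentence ``To obtain the analytic continuation, one computes the Fourier expansion of $F_{N,-n,s}(z)$,'' deferring the details to Niebur's original argument. Your outline --- unfolding to obtain the Fourier expansion with Kloosterman sums and Bessel integrals, continuing past $\re(s)=1$ via the Weil bound, and then reading off the weight~$0$ harmonic Maass form properties at $s=1$ --- is precisely that computation carried out in full, so there is nothing to add.
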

 To obtain the analytic continuation, one computes the Fourier expansion of $F_{N,-n,s}(z)$.
	We then define the functions $j_{N,n}(z)$ by
	\begin{align}
j_{N,n}(z):=2\pi\sqrt{n}F_{N, -n}(z).\label{JNnDefinition}
	\end{align}
For $N=1$, we recover the $j_n(z)$ from the introduction up to the constant $2\pi \sqrt{n} c_1(n,0)=24 \sigma_1(n)$. \\

 In order to formally state Theorem \ref{AKNGeneralization} (4), for an arbitrary solution $a,b\in\Z$ to $ad-bc=1$, we define
\begin{align}\label{eqn:rcd}
r_z(c,d)&:=ac|z|^2+(ad+bc)\re(z)+bd,\\
\label{eqn:Lambdazdef}\Lambda_z&:=\left\{\alpha^2 |z|^2 + \beta \re(z) + \gamma^2>0: \alpha,\beta,\gamma\in \Z\right\},\\
\label{eqn:Slambdadef}S_{\lambda}&:=\left\{ (c,d)\in N\N_0\times \Z:\ \gcd(c,d)=1\text{ and }Q_{z}(c,d)=\lambda\right\},\\
\nonumber Q_{z}(c,d)&:=c^2|z|^2+2cd\re(z)+d^2.
\end{align}
Note that although $r_{z}(c,d)$ is not uniquely determined, $e(-nr_{z}(c,d)/Q_z(c,d))$ is well-defined. 

\begin{proof}[Sketch of proof of Theorem \ref{AKNGeneralization}]
Part (1) follows from the explicit Fourier expansions given in Proposition \ref{Niebur} and Proposition 2.1 of \cite{FirstPaper}. Parts (2) and (3) follow from a straightforward calculation. To prove part (4), we rewrite the claimed asymptotic formula in terms of the corresponding points $Mz$ with $M=\begin{psmallmatrix}a&b\\c&d\end{psmallmatrix} \in\Gamma_{\infty}\backslash \Gamma_0(N)$. Directly plugging in and simplifying yields that the claim in Theorem \ref{AKNGeneralization} (4) is equivalent to
\begin{equation*}
j_{N,n}(z)= \sum_{\substack{M\in \Gamma_{\infty}\backslash \Gamma_0(N)\\ n\im(Mz)\geq \im(z)}} e^{-2\pi in Mz}+O_{z}(n),
\end{equation*}
which can be shown using standard estimates and the Weil bound for Kloosterman sums.   
\end{proof}

The functions $H_{N,\varrho}^*$ do not have simple poles at $\varrho$, but they can still be seen as analogues of $H_{N,z}^*$ at the cusps of $\Gamma_0(N)$. 
It turns out that they are connected to the harmonic weight $2$ Eisenstein series $E_{2,N,\varrho}^*(\tau)$ for $\Gamma_0(N)$.  For $\re (s) >0$, define
\begin{equation*}
E_{2, N,\varrho,s}^*(\tau):=\sum_{M\in \Gamma_\varrho\setminus \Gamma_0(N)} j\left(M_\varrho M, \tau\right)^{-2}\left|j\left(M_\varrho M, \tau\right)\right|^{-2s}.
\end{equation*}
Using the Hecke trick, it is well-known (cf. Satz 6 of \cite{Hecke}) that $E_{2,N,\varrho,s}^*$ has an analytic continuation to $s=0$, denoted by $E_{2,N,\varrho}^*$. 

\begin{proof}[Sketch of proof of Theorem \ref{thm:HtauCusp}]
The Fourier expansion of $H_{N,\varrho}^*$ given in Theorem \ref{thm:HtauCusp} (1) follows from Lemma 5.4 of \cite{BK}. We obtain the statement by comparing to the Fourier expansion of $E_{2,N,\varrho}^*$ given in Theorem 1 of \cite{Smart}. For this we recall that the {\it generalized Kloosterman sums} for a cusp $\varrho$ of $\Gamma_0(N)$ are defined by
\begin{equation}\label{Krhodef}
	K_{i\infty, \varrho}(m,n;c):=\sum_{\left.\left.\left(\begin{smallmatrix} a & b \\ c & d\end{smallmatrix}\right) \in\Gamma_\infty\right\backslash \Gamma_0(N)M_\varrho\right/\Gamma_\infty^{\ell_\varrho}} e\left(\frac{m d}{\ell_\varrho c} +\frac{na}{c}\right).
\end{equation}
Parts (2) and (3) follow again from a straightforward calculation.
\end{proof}

\section{Further Remarks}\label{FinalRemarks}

\subsection{Applications to Formulas of Ramanujan}

Polar harmonic Maass forms can be used to solve the difficult problems of computing and estimating Fourier coefficients of meromorphic modular forms. Hardy and Ramanujan \cite{HR3} considered the special case where the form has a unique simple pole in $\mathrm{SL}_2(\Z)\backslash\H$. In particular, they found a formula for the reciprocal of the Eisenstein series $E_6$.  Ramanujan \cite{Ramanujan88} then conjectured further formulas for other examples of meromorphic modular forms, such as 
\[
\frac{1}{E_4(\tau)}=\sum_{n\geq0}\beta_ne^{2\pi in\tau}
,
\]
where $\beta_n$ is a sum of the form \eqref{eqn:Ramanujanlike} with $z=\rho:= \frac{1}{2}+i\sqrt{3}/2$.
Expansions like \eqref{eqn:Ramanujanlike} are very convenient since they turn out to converge extremely rapidly. It is easy to check that the main asymptotic growth in \eqref{eqn:Ramanujanlike} comes from the $\lambda=1$ terms, yielding
$$
\beta_n \sim (-1)^n\frac{3}{E_6(\rho)}e^{\pi n\sqrt{3}}.
$$
Such estimates confirm that the coefficients of meromorphic modular forms grow much faster than the coefficients of weakly holomorphic modular forms, as a lengthy calculation shows. Ramanujan's formulas for forms with simple poles were proven by Bialek in his Ph.D. thesis written under Berndt \cite{bia}.  Berndt, Bialek, and Yee \cite{bby} then pushed the Circle Method further to study examples with second-order poles, and they managed to prove the remaining cases of Ramanujan's formulas. The proofs of  \cite{bby, bia, HR3} all utilized a modification of the Hardy-Ramanujan Circle Method, but the calculations rapidly become more difficult with rising pole orders.\\

It turns out that, for $z\in\{i,\rho\}$, all of the meromorphic modular forms investigated by Ramanujan may be written as linear combinations of the series  
\begin{equation*}
f_{k,j,r}(z,\tau):= y^{-j}\sum_{m=0}^{\infty} \;\sideset{}{^*}\sum_{\mathfrak{b}\subseteq\Z[z]} \frac{C_{k}\left(\mathfrak{b},m\right)}{N(\mathfrak{b})^{\frac{k}{2}-j}} (4\pi m)^r e^{\frac{2\pi  mz}{N(\mathfrak{b})}} e^{2\pi i m \tau},
\end{equation*}
where $\mathfrak{b}$ runs over primitive ideals of $\Z[z]$, $N(\mathfrak{b})$ is the norm of $\mathfrak b$, and $C_k$ are certain functions on ideals which we next describe.  To be precise, for $\mathfrak{b}=(c\rho+d)\subset\Z[\rho]$,
we define
\begin{equation}\notag
C_{6m}\left(\mathfrak{b},n\right) :=\cos\left(\frac{\pi n}{N(\mathfrak{b})}\left(ad+bc-2ac-2bd\right) -6m\arctan\left(\frac{c\sqrt{3}}{2d-c}\right)\right),
\end{equation}
and we set $C_{m}(\mathfrak{b},n):=0$ if $6\nmid m$.
Similarly, for $\mathfrak{b}=(ci+d)\subseteq\Z[i]$,
we let
\begin{equation}\notag
C_{4m}\left(\mathfrak{b},n\right):=\cos\left(\frac{2\pi n}{N(\mathfrak{b})}\left(ac+bd\right)+4m\arctan\left(\frac{c}{d}\right)\right)
\end{equation}
and $C_{m}(\mathfrak{b},n):=0$ if $4\nmid m$. \\ 

To describe the coefficients in these linear combinations, we need a basis of meromorphic Poincar\'e series, originally discovered by Petersson. On the subgroup $\Gamma_0(N)$, a weight
$k\in\N_{>1}$, we define a  $2$-variable Poincar\'e series (for $k=1$, similar construction holds but an analytic continuation is required.) Setting 
\[
H_{2k,N}(z,\tau):=\sum_{M\in\Gamma_{\infty}\backslash\Gamma_0(N)}\frac{1}{1-e^{2\pi i(\tau-z)}}\bigg|_{2k,z}M,
\]
we then let
\[
\mathcal H_{2k,N}(z,\tau):=H_{2k}(z,\tau)+\sum_{r=0}^{2k-2}\frac{(2iv)^r}{r!}\frac{\partial^r}{\partial\overline{\tau}^r}H_{2k}(z,\overline{\tau}).
\]
In the variable $z$, this function is a weight $2k$ meromorphic cusp form on $\Gamma_0(N)$ with poles supported on $\H$, and in $\tau$ it is a polar harmonic Maass form in $\mathcal{H}_{2-2k}^{\operatorname{cusp}}(\Gamma_0(N))$. The following result, which follows by a careful computation and comparison of principal parts at poles on $\H$, shows that the functions $\mathcal H_{2k,N}$ provide bases for polar harmonic Maass forms. In what follows, $R_{k,\z}:=2i\frac{\partial}{\partial\z}+\frac{k}{\im(\z)}$ denotes the usual Maass raising operator with respect to the variable $\z$.
\begin{proposition}[Proposition 4.2 of \cite{BKFC2}]\label{PolarHMFExp}
If $f\in\mathcal{H}_{2-2k}^{\operatorname{cusp}}(\Gamma_0(N))$ with $k\in\N_{>1}$, then we can decompose $f$ as a sum of the form
\[
f(\tau)=\sum_{\ell=1}^r\sum_{n=0}^{n_{\ell}}a_{\ell,n}\left[R_{2k,z}^n\left(\mathcal H_{2k}(z,\tau)\right)\right]_{z=z_{\ell}}
,
\]
where $a_{\ell,n}\in\C$ and $z_1,\ldots,z_r$ are the locations of the poles of $f$ on $\H$. 
\end{proposition}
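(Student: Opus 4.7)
The plan is to subtract from $f$ a linear combination of basis elements $[R_{2k,z}^n(\mathcal{H}_{2k,N}(z,\tau))]_{z=z_\ell}$ that eliminates every elliptic singularity of $f$ on $\H$, and then to invoke the Bruinier--Funke pairing of Proposition~\ref{expansionprod} together with the triviality of weakly holomorphic forms of negative weight to argue that the remainder vanishes.

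The first step is to pin down the elliptic principal part of $\mathcal{H}_{2k,N}(z,\tau)$ in $\tau$ near $\tau=z$. Only the identity coset in the $\Gamma_\infty\backslash\Gamma_0(N)$-sum defining $H_{2k,N}(z,\tau)$ contributes a singularity there, and it yields $(1-e^{2\pi i(\tau-z)})^{-1}=-(2\pi i(\tau-z))^{-1}+O(1)$; the non-holomorphic completion $\sum_{r=0}^{2k-2}\tfrac{(2iv)^r}{r!}\partial_{\overline\tau}^r H_{2k,N}(z,\overline\tau)$ is smooth near $\tau=z$, so $\mathcal{H}_{2k,N}(z,\tau)$ has a simple pole in $\tau$ at $\tau=z$ with an explicit leading coefficient. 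Rewriting in the elliptic coordinate $X_{z_\ell}$ of Proposition~\ref{EllipticExpansion}, the leading principal-part term is a nonzero multiple of $(\tau-\overline{z_\ell})^{2k-2}X_{z_\ell}^{-1}(\tau)$.

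Next I would analyse how $R_{2k,z}^n$, applied in $z$, propagates to the $\tau$-pole structure. Since $R_{2k,z}$ is first-order in $z$ with holomorphic symbol $2i\partial_z$, repeated differentiation of $(\tau-z)^{-1}$ in $z$ raises the order of the $\tau$-pole at $\tau=z$ by one per application; a bookkeeping induction, which must also track the contributions from differentiating the non-holomorphic correction, yields
\[
\bigl[R_{2k,z}^n\mathcal{H}_{2k,N}(z,\tau)\bigr]_{z=z_\ell} = \alpha_n(\tau-\overline{z_\ell})^{2k-2}X_{z_\ell}^{-(n+1)}(\tau) + \text{lower-order elliptic poles at }z_\ell
\]
with $\alpha_n\neq 0$, and with no singularity at any $z_{\ell'}$ inequivalent to $z_\ell$ modulo $\Gamma_0(N)$. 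Triangularity in the pole order then allows one to solve in descending order on $n$ for constants $a_{\ell,n}$ such that
\[
g(\tau):=f(\tau)-\sum_{\ell=1}^{r}\sum_{n=0}^{n_\ell}a_{\ell,n}\bigl[R_{2k,z}^n(\mathcal{H}_{2k,N}(z,\tau))\bigr]_{z=z_\ell}
\]
has vanishing elliptic principal part at each $z_\ell$, and therefore no singularity on $\H$.

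Finally, $g$ lies in $\mathcal{H}^{\operatorname{cusp}}_{2-2k}(\Gamma_0(N))$ and, by construction, has no elliptic principal part on $\H$; combined with the fact that both $f$ and each subtracted Poincar\'e series are free of holomorphic principal parts at every cusp (the $\mathcal{H}_{2k,N}(z,\tau)$ being cuspidal in $\tau$), Proposition~\ref{expansionprod} gives $\{h,g\}=\langle h,\xi_{2-2k}(g)\rangle=0$ for every $h\in S_{2k}(\Gamma_0(N))$. Choosing $h=\xi_{2-2k}(g)\in S_{2k}(\Gamma_0(N))$ forces $\xi_{2-2k}(g)=0$, so $g$ is a weight $2-2k$ weakly holomorphic modular form on $\Gamma_0(N)$; being holomorphic on $\H$, bounded at the cusps, and of negative weight, $g$ must vanish identically. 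The main technical obstacle I anticipate is the explicit step two: tracking the iterated action of $R_{2k,z}$ on the non-holomorphic part of $\mathcal{H}_{2k,N}$ and verifying the nonvanishing of the leading coefficient $\alpha_n$, which is precisely what makes the linear system for the $a_{\ell,n}$ uniquely solvable.
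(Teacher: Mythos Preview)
Your strategy—match the elliptic principal parts of $f$ at each $z_\ell$ by a triangular system in the pole order using the raised series $[R_{2k,z}^n\mathcal H_{2k,N}(z,\tau)]_{z=z_\ell}$, then kill the remainder via the Bruinier--Funke pairing and the nonexistence of negative-weight holomorphic forms—is precisely the ``comparison of principal parts'' argument the paper indicates, and the paragraph following Proposition~\ref{expansionprod} already records the uniqueness principle you invoke at the end.

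One genuine gap to flag: you assert that ``$f$ \dots\ [is] free of holomorphic principal parts at every cusp,'' but membership in $\mathcal H_{2-2k}^{\operatorname{cusp}}(\Gamma_0(N))$ only says $\xi_{2-2k}(f)$ is a cusp form; it does \emph{not} force $f$ itself to be bounded at the cusps. Indeed, ordinary harmonic Maass forms with principal part at $i\infty$ and no poles on $\H$ (e.g.\ a preimage of $\Delta$ under $\xi_{-10}$ when $N=1$, $k=6$) lie in this space, yet have $r=0$ in the decomposition and are visibly nonzero. Since the $\mathcal H_{2k,N}(z,\tau)$ are bounded at the cusps in $\tau$, the proposition as literally stated here must carry an implicit hypothesis that $f$ has its singularities only on $\H$; you should state that hypothesis explicitly rather than slide it in as a ``fact'' about $f$. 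With that caveat made precise, your argument goes through.
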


Proposition~\ref{PolarHMFExp} then allows us to describe the Fourier expansions of meromorphic cusp forms.
\begin{theorem}[Theorem 1.1 of \cite{BKFC2}]\label{Fouriercoefficients}
If $f$ is a meromorphic cusp form of integral weight $2-2k<0$ and $z\in\{i,\rho\}$ is the only pole of $f$ in $\SL_2(\Z)\setminus\mathbb{H}$, then, with coefficients $a_{\ell}$ defined by the expansion in Proposition~\ref{PolarHMFExp}, for $v>y$, we have the following Fourier expansion: 
$$
f(\tau) = 2e_{1,z}\sum_{n=0}^{n_0} a_{n} \sum_{j=0}^{n} \frac{(2k+n-1)!}{(2k+n-1-j)!}\binom{n}{j} f_{2k+2n,j,n-j}(z,\tau).
$$
\end{theorem}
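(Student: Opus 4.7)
The plan is to combine Proposition~\ref{PolarHMFExp} with an explicit Fourier expansion of the Petersson Poincar\'e series $\mathcal{H}_{2k,1}(z,\tau)$ in $\tau$, and then to apply the iterated raising operator $R_{2k,z}^n$ in a controlled way. Since $f$ has a single pole at $z \in \{i, \rho\}$ in $\SL_2(\Z)\backslash \H$ of some order $n_0 + 1$, Proposition~\ref{PolarHMFExp} writes
\[
f(\tau) = \sum_{n=0}^{n_0} a_n \left[R_{2k,z}^n \mathcal{H}_{2k,1}(z,\tau)\right]_{z=z_\ell},
\]
so it suffices to compute the Fourier expansion in $\tau$ of $R_{2k,z}^n \mathcal{H}_{2k,1}(z,\tau)$ and then specialize $z$.

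First, I would compute the Fourier expansion of $H_{2k,1}(z,\tau)$ in the variable $\tau$. For $v > y$ the geometric series expansion
$
\frac{1}{1-e^{2\pi i(\tau-z)}} = \sum_{m=0}^{\infty} e^{2\pi i m(\tau-z)}
$
converges, so after applying the weight $2k$ slash operator in $z$ and unfolding $\Gamma_\infty\backslash\SL_2(\Z)$, one obtains a double sum over $m \geq 0$ and over matrices $M = \left(\begin{smallmatrix} a & b \\ c & d \end{smallmatrix}\right)$, with summand involving $(cz+d)^{-2k}e^{-2\pi i m Mz}e^{2\pi i m \tau}$. Adding the non-holomorphic completion $\sum_{r=0}^{2k-2}\frac{(2iv)^r}{r!}\partial_{\overline\tau}^r$ produces, in the $\tau$-Fourier expansion, polynomials in $v$ of degree $\leq 2k-2$ multiplying each exponential. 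Next, for $z_\ell \in \{i,\rho\}$, the standard identification $(c z_\ell + d) \leftrightarrow \mathfrak{b} = (c z_\ell + d) \subseteq \Z[z_\ell]$, together with the $\SL_2(\Z)$-stabilizer of $z_\ell$ (of order $4$ or $6$), allows the matrix sum to be repackaged as a sum over primitive ideals $\mathfrak{b}$ of $\Z[z_\ell]$, producing the cosines $C_{2k}(\mathfrak{b},m)$ together with denominators $N(\mathfrak{b})^{k}$ and exponentials $e^{2\pi m z_\ell/N(\mathfrak{b})}$. This already shows that the coefficient of $e^{2\pi i m \tau}$ in $\mathcal{H}_{2k,1}(z_\ell,\tau)$ is a linear combination of the quantities appearing in $f_{2k,j,0}(z_\ell,\tau)$.

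Second, I would apply $R_{2k,z}^n$ and use a Leibniz-type formula for iterated raising. For a function of the form $g(z)y^{-a}$ with $g$ holomorphic in $z$, iteration of $R_{k,z} = 2i\partial_z + k/y$ yields an expansion
\[
R_{2k,z}^n\bigl( g(z) y^{-a}\bigr) = \sum_{j=0}^{n}\binom{n}{j}\frac{(2k+n-1)!}{(2k+n-1-j)!}\,(2i)^{n-j}\,\frac{\partial_z^{\,n-j} g(z)}{y^{a+j}},
\]
which is exactly where the combinatorial factor in the theorem originates. The holomorphic derivative $\partial_z^{n-j}$ acting on $e^{2\pi i m z/N(\mathfrak{b})}$ produces the factor $(4\pi m)^{n-j}$ (up to an overall constant absorbed into the definition of $f_{k,j,r}$), while the denominator $y^j$ matches the $y^{-j}$ prefactor in $f_{2k+2n,j,n-j}(z,\tau)$. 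Collecting everything and reading off the sum gives exactly
\[
2e_{1,z}\sum_{n=0}^{n_0} a_n \sum_{j=0}^{n} \frac{(2k+n-1)!}{(2k+n-1-j)!}\binom{n}{j} f_{2k+2n,j,n-j}(z,\tau),
\]
the factor $2e_{1,z}$ arising from the stabilizer of $z_\ell$ when repackaging the matrix sum as a sum over primitive ideals.

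The main obstacle will be the bookkeeping of the raising operator's action on the non-holomorphic completion: the anti-holomorphic derivatives $\partial_{\overline\tau}^r$ in the definition of $\mathcal{H}_{2k,1}$ produce polynomials in $v$ multiplying each $e^{2\pi i m\tau}$, and one must verify that after $R_{2k,z}^n$ is applied these terms reassemble (together with the holomorphic contribution) into the clean expressions $y^{-j}e^{2\pi m z/N(\mathfrak{b})}$ that define $f_{2k+2n,j,n-j}$. A secondary technical point is justifying term-by-term differentiation and interchange of sums, which is fine under the hypothesis $v>y$ since all expansions converge absolutely there.
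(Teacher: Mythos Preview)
The paper you are working from is a survey and does not actually give a proof of this theorem; it is stated as Theorem~1.1 of \cite{BKFC2} and followed only by a remark. The only hint the paper gives is the sentence preceding the statement, ``Proposition~\ref{PolarHMFExp} then allows us to describe the Fourier expansions of meromorphic cusp forms,'' which indicates that the decomposition of Proposition~\ref{PolarHMFExp} is indeed the starting point---exactly as you propose.

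Your outline is the natural one and matches what the original source does: decompose $f$ via Proposition~\ref{PolarHMFExp}, compute the $\tau$-Fourier expansion of $\mathcal{H}_{2k,1}(z,\tau)$ by unfolding the Poincar\'e sum and expanding the geometric series for $v>y$, apply $R_{2k,z}^n$ via a Leibniz-type identity producing the factor $\binom{n}{j}\frac{(2k+n-1)!}{(2k+n-1-j)!}$, and then repackage the sum over $\Gamma_{\infty}\backslash\SL_2(\Z)$ as a sum over primitive ideals of $\Z[z]$ using the stabilizer of $z\in\{i,\rho\}$, which accounts for the $2e_{1,z}$ and the cosines $C_k(\mathfrak{b},m)$. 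The obstacle you flag---tracking the non-holomorphic completion through the raising operator and checking it reassembles correctly---is real but purely computational; since $f$ is meromorphic, the non-holomorphic contributions must ultimately cancel, and for the special points $i,\rho$ this cancellation can be made explicit. Your proposal is therefore correct in approach and consistent with the (unstated here) argument of \cite{BKFC2}.
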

\begin{remark}
A more general formula for meromorphic cusp forms with arbitrary order poles at arbitrary points is given in Theorem 4.1 of \cite{BK3}.
\end{remark}

\subsection{Applications to Green's Functions}
We now consider the functions 
\[
f_\mathcal{A}(\tau):=D^{\frac k2}\sum_{ Q\in\mathcal{A}}Q(\tau,1)^{-k}
,
\]
where $\mathcal{A}$ is an $\SL_2(\Z)$-equivalence class of integral binary quadratic forms of discriminant $-D<0$. In what follows, we denote by $\tau_Q$ the unique root of $Q(X,1)$ in the upper half plane.  These are analogous to Zagier's $f_{k,D}$ functions for $D>0$, which play an important role in Shimura and Shintani lifts. In the case when $D<0$ Bengoechea showed in her thesis \cite{Bengoechea13}  that one obtains interesting meromorphic modular forms. The importance of these functions comes to the forefront when one integrates them  which can be used to give evaluations of higher Green's functions at CM points \cite{BKvP}. Such Green's functions appeared in Gross and Zagier's \cite{GrossZagier86} evaluation of the local heights on $X_0(N)$ at Archimedean places and an identity between higher Green's functions evaluated at CM-points and the infinite part of the height pairing of CM-cycles was later established by Zhang \cite{Zhang}. We briefly recall the definition of these functions. 
\begin{definition}
For $k\in\mathbb{N}_{>1}$ and $N\in\N$, the {\it higher Green's function} $\mathcal G_k\colon \mathbb{H}\times\mathbb{H}\to\mathbb{C}$ is uniquely characterized by the following properties.
\begin{itemize}
\item [{\rm i)}] The function $\mathcal G_k$ is smooth and real-valued on $\mathbb{H}\times\mathbb{H}\setminus \{(z, \gamma z): \gamma\in\Gamma_0(N), z\in\mathbb{H}\}.$
\item [{\rm ii)}] For $\gamma_1, \gamma_2\in\Gamma_0(N)$, we have $\mathcal G_k(\gamma_1 z, \gamma_2\mathfrak z)= \mathcal G_k(z, \mathfrak z).$
\item[{\rm iii)}] We have
\[
\Delta_{0, z}\left(\mathcal G_k\left(z, \mathfrak z\right)\right)=k(1-k)\mathcal G_k\left(z,\mathfrak z\right)=
\Delta_{0, \mathfrak z}\left(\mathcal G_k\left(z, \mathfrak z\right)\right).
\]
\item[{\rm iv)}] As $z\to \mathfrak z$ 
\[
\mathcal G_k(z, \mathfrak z)=\frac{1}{e_{N,\mathfrak z}}\log\left(r_{\mathfrak z}\left(z\right)\right)+O(1),
\]
where
\[
r_{\mathfrak z}(z):=\operatorname{tanh}\left(\frac{d(z,\mathfrak z)}2\right)=|X_\mathfrak{z}(z)|.
\]
Here $d(z,\mathfrak z)$ denotes the hyperbolic distance between $z$ and $\mathfrak z$.
\item[{\rm v)}] As $z$ approaches a cusp, $\mathcal G_k(z, \mathfrak z)\to0$.
\end{itemize}
\end{definition}

These functions have a long history and Gross and Zagier conjectured \cite{GrossZagier86} that their evaluations at CM-points are essentially logarithms of algebraic numbers. Specifically, in the special case when the space of weight $2k$ cusp forms on $\Gamma_0(N)$ is trivial, their conjecture states that
\[
\mathcal G_k(z, \mathfrak z)=(D_1 D_2)^{\frac{1-k}{2}}\log(\alpha)
\]
for CM-points $z, \mathfrak z$ of discriminants $D_1 \text{ and } D_2$, respectively and $\alpha$ is an algebraic number. Various cases of this conjecture have been solved. For example Mellit, in his Ph.D. thesis \cite{Mellit}, proved the case $k=2, \mathfrak z=i$ and also gave an interpretation of $\alpha$ as a certain intersection number of certain higher Chow cycles. Viazovska \cite{Viazovska} then proved the conjecture when the two CM points lie in the same imaginary quadratic field. \\

The Petersson inner product of two $f_Q$ functions, suitably regularized and studied using the theory of polar harmonic Maass forms, gives evaluations of the Green's functions.
\begin{theorem}[Corollary 1.5 of \cite{BKvP}]\label{thm:innerGreens}
Suppose that $-D_1,-D_2<0$ are two discriminants.  If $Q_1, Q_2$ are quadratic forms in distinct $\operatorname{SL}_2(\Z)$-equivalence classes $\mathcal{A}_1, \mathcal{A}_2$ of discriminants $-D_1, -D_2$, then we have
$$
\left<f_{\mathcal{A}_1},f_{\mathcal{A}_2}\right>=\frac{(-1)^{k}\sqrt{\pi}\Gamma\left(k-\frac12\right) }{2^k (k-1)!}e_{1,\tau_{Q_1}}e_{1,\tau_{Q_2}}\mathcal G_k\left(\tau_{Q_1},\tau_{Q_2}\right)
$$
with the rational numbers $e_{1,z}$ defined in Remark 2 after Proposition \eqref{EllipticExpansion}.
\end{theorem}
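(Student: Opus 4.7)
The plan is to compute the regularized Petersson inner product by applying the pairing formula of Proposition~\ref{expansionprod} --- extended from holomorphic to meromorphic cusp forms --- to a polar harmonic Maass form whose $\xi$-image is proportional to $f_{\mathcal{A}_1}$, and then to identify the resulting sum of elliptic expansion coefficients of $f_{\mathcal{A}_2}$ at $\tau_{Q_1}$ with $\mathcal{G}_k(\tau_{Q_1},\tau_{Q_2})$ by checking the five defining axioms.

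First, using Proposition~\ref{PolarHMFExp} together with the Petersson series $\Psi_{2k,-k,1}^{\tau_{Q_1}}$ of~\eqref{Psi} (or equivalently with $\mathcal{H}_{2k,1}(\tau_{Q_1},\tau)$ viewed in its $\tau$-variable as a polar harmonic Maass form of weight $2-2k$), I would produce a form $\mathcal{F}_{\mathcal{A}_1}\in\mathcal{H}_{2-2k}^{\operatorname{cusp}}(\SL_2(\Z))$ satisfying $\xi_{2-2k}(\mathcal{F}_{\mathcal{A}_1})=c_0\, f_{\mathcal{A}_1}$ for an explicit nonzero $c_0$, with principal part concentrated solely at $\tau_{Q_1}$. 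Writing $Q_1(\tau,1)^{-k}$ as a constant multiple of $(\tau-\tau_{Q_1})^{-k}(\tau-\overline{\tau_{Q_1}})^{-k}$ and summing over $\Gamma_{Q_1}\backslash\SL_2(\Z)$, where $\Gamma_{Q_1}:=\Stab_{\SL_2(\Z)}(Q_1)$ has order $2e_{1,\tau_{Q_1}}$, makes this principal part completely explicit. Taking $g=f_{\mathcal{A}_2}$ and $F=\mathcal{F}_{\mathcal{A}_1}$ in a regularized extension of Proposition~\ref{expansionprod} then yields
\[
\left\langle f_{\mathcal{A}_1},f_{\mathcal{A}_2}\right\rangle=\overline{c_0}^{\,-1}\{f_{\mathcal{A}_2},\mathcal{F}_{\mathcal{A}_1}\}=\textup{const}\cdot\sum_{n\ge 1}\frac{\pi\, e_{1,\tau_{Q_1}}}{\im(\tau_{Q_1})}\,c^{+}_{\mathcal{F}_{\mathcal{A}_1},\tau_{Q_1}}(-n)\,c_{f_{\mathcal{A}_2},\tau_{Q_1}}(n-1),
\]
where the $i\infty$-contribution vanishes because $f_{\mathcal{A}_2}$ is cuspidal. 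The right-hand side is a linear combination of elliptic expansion coefficients of $f_{\mathcal{A}_2}$ at $\tau_{Q_1}$, with weights dictated by the prescribed principal part of $\mathcal{F}_{\mathcal{A}_1}$ at $\tau_{Q_1}$.

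Second, letting $\Phi(\tau_{Q_1},\tau_{Q_2})$ denote this expression, I would identify it with $\mathcal{G}_k(\tau_{Q_1},\tau_{Q_2})$ by verifying the axioms (i)--(v). Modular bi-invariance and smoothness off the diagonal come from the class-sum structure of $f_{\mathcal{A}_i}$. The eigenvalue equation $\Delta_{0,\tau_{Q_i}}\Phi=k(1-k)\Phi$ follows from commuting $\Delta_0$ through the convergent elliptic expansion at $\tau_{Q_1}$ and using the standard fact that the point-pair invariant $y^{2k}|Q(\tau,1)|^{-2k}$ is an eigenfunction of $\Delta_0$ with eigenvalue $k(1-k)$ in either CM variable. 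The logarithmic singularity at $\tau_{Q_1}\to\tau_{Q_2}$ is the standard analytic feature of such a regularized inner product of two meromorphic modular forms whose poles coalesce; computing its residue and comparing with $\log r_{\mathfrak{z}}(z)=\log|X_\mathfrak{z}(z)|$ from axiom (iv) fixes the proportionality constant as $\tfrac{(-1)^k\sqrt{\pi}\,\Gamma(k-\frac12)}{2^k(k-1)!}\,e_{1,\tau_{Q_1}}e_{1,\tau_{Q_2}}$. Vanishing at cusps is inherited from the cuspidal vanishing of $f_{\mathcal{A}_i}$.

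The main obstacle is the regularization together with the accompanying extension of Proposition~\ref{expansionprod} from holomorphic cusp forms $g$ to meromorphic cusp forms $g=f_{\mathcal{A}_2}$: the Stokes-theorem derivation of the pairing picks up new boundary contributions from small geodesic disks around the poles of $g$ in $\H$, and these must be matched against the local principal parts of $F$ in a way compatible with the chosen regularization of $\langle f_{\mathcal{A}_1},f_{\mathcal{A}_2}\rangle$; keeping track of which disk boundary terms survive the limit, which produce the logarithm, and which merely renormalize the constant prefactor is the technical core of the argument. Once this extension is in place, the axiomatic characterization of $\mathcal{G}_k$ forces the identity, and the explicit numerical prefactor is read off from the residue at the diagonal.
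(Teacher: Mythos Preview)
This paper is a survey and does not contain a proof of Theorem~\ref{thm:innerGreens}; the result is merely quoted as Corollary~1.5 of \cite{BKvP}, followed by a one-line remark and no argument. The only hint the paper gives is the sentence immediately preceding the theorem: ``The Petersson inner product of two $f_Q$ functions, suitably regularized and studied using the theory of polar harmonic Maass forms, gives evaluations of the Green's functions.'' So there is no proof here against which to compare your proposal in any detail.

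That said, your outline is consistent with that hint: you propose to build a $\xi$-preimage of $f_{\mathcal{A}_1}$ in $\mathcal{H}_{2-2k}^{\operatorname{cusp}}$, apply a regularized version of the pairing in Proposition~\ref{expansionprod}, and then identify the result with $\mathcal{G}_k$ via its axiomatic characterization. This is indeed the shape of the argument in \cite{BKvP}. You have also correctly flagged the genuine technical point: Proposition~\ref{expansionprod} as stated requires $g\in S_k$, whereas $f_{\mathcal{A}_2}$ is only a meromorphic cusp form, so the Stokes argument must be redone with excised hyperbolic disks around the poles of $g$, and the regularization of $\langle f_{\mathcal{A}_1},f_{\mathcal{A}_2}\rangle$ must be made precise before any of the subsequent identifications make sense. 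Your sketch of the axiom check is plausible but thin in places (for instance, the eigenvalue property in both variables and the exact normalization from axiom~(iv) require more than a sentence each), though these are matters of execution rather than strategy. If you want a detailed comparison, you will need to consult \cite{BKvP} directly, since the present paper supplies none of the details.
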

\begin{remark}
It is possible that this result will shed new light on and offer an alternative approach to solving Gross and Zagier's conjecture.
\end{remark}

\end{document}